\documentclass[12pt]{amsart}

\setcounter{secnumdepth}{1}
\usepackage[matrix,arrow,curve,frame]{xy}
\usepackage{amsmath,amsthm,amssymb,enumerate}
\usepackage{latexsym}
\usepackage{amscd}
\usepackage[colorlinks=false]{hyperref}
\usepackage{euscript}

\setlength{\oddsidemargin}{0in} \setlength{\evensidemargin}{0in}
\setlength{\marginparwidth}{0in} \setlength{\marginparsep}{0in}
\setlength{\marginparpush}{0in} \setlength{\topmargin}{0in}
\setlength{\headheight}{0pt} \setlength{\headsep}{0pt}
\setlength{\footskip}{.3in} \setlength{\textheight}{9.2in}
\setlength{\textwidth}{6.5in} \setlength{\parskip}{4pt}

\newtheorem{thm}[subsection]{Theorem}

\newtheorem{cor}[subsection]{Corollary}
\newtheorem{lemma}[subsection]{Lemma}

\theoremstyle{definition}

\numberwithin{equation}{section}

\def\cP{{\cal P}}
\def\cA{{\cal A}}

\def\ra{\rightarrow}
\def\bra{\langle}
\def\ket{\rangle}


\def\cA{{\mathcal A}}
\def\cB{{\mathcal B}}

\def\cH{{\mathcal H}}
\def\cI{{\mathcal I}}
\def\cJ{{\mathcal J}}

\def\cM{{\mathcal M}}

\def\cP{{\mathcal P}}

\def\cR{{\mathcal R}}
\def\cS{{\mathcal S}}

\def\cV{{\mathcal V}}
\def\cW{{\mathcal W}}


\def\gg{{\mathfrak g}}

\def\gl{{\mathfrak l}}

\def\go{{\mathfrak o}}

\def\gs{{\mathfrak s}}

\def\gB{{\mathfrak B}}

\newfont{\german}{eufm10}

\begin{document}
\pagestyle{plain}

\title
{Invariant subalgebras of affine vertex algebras}

\dedicatory{Dedicated to my father Michael A. Linshaw, M. D., on the occasion of his 70th birthday}

\author{Andrew R. Linshaw}
\address{Department of Mathemtics, Brandeis University}
\email{linshaw@brandeis.edu}

{\abstract
\noindent Given a finite-dimensional complex Lie algebra $\gg$ equipped with a nondegenerate, symmetric, invariant bilinear form $B$, let $V_k(\gg,B)$ denote the universal affine vertex algebra associated to $\gg$ and $B$ at level $k$. For any reductive group $G$ of automorphisms of $V_k(\gg,B)$, we show that the invariant subalgebra $V_k(\gg,B)^G$ is strongly finitely generated for generic values of $k$. This implies the existence of a new family of deformable $\cW$-algebras $\cW(\gg,B,G)_k$ which exist for all but finitely many values of $k$.}

\keywords{invariant theory; affine vertex algebra; current algebra; reductive group action; orbifold construction; strong finite generation; $\cW$-algebra}
\maketitle
\section{Introduction}

We call a vertex algebra $\cV$ {\it strongly finitely generated} if there exists a finite set of generators such that the collection of iterated Wick products of the generators and their derivatives spans $\cV$. Many known vertex algebras have this property, including affine, free field and lattice vertex algebras, as well as the $\cW$-algebras $\cW(\gg,f)_k$ associated via quantum Drinfeld-Sokolov reduction to a simple, finite-dimensional Lie algebra $\gg$ and a nilpotent element $f\in \gg$. Strong finite generation has many important consequences, and in particular implies that both Zhu's associative algebra $A(\cV)$, and Zhu's commutative algebra $\cV / C_2(\cV)$, are finitely generated.

In recent work, we have investigated the strong finite generation of invariant vertex algebras $\cV^G$, where $G$ is a reductive group of automorphisms of $\cV$. This is a vertex algebra analogue of Hilbert's theorem on the finite generation of classical invariant rings. It is a subtle and essentially \lq\lq quantum" phenomenon that is generally destroyed by passing to the classical limit before taking invariants. Often, $\cV$ admits a $G$-invariant filtration for which $\text{gr}(\cV)$ is a commutative algebra with a derivation (i.e., an abelian vertex algebra), and the classical limit $\text{gr}(\cV^G)$ is isomorphic to $(\text{gr}(\cV))^G$ as a commutative algebra. Unlike $\cV^G$, $\text{gr}(\cV^G)$ is generally not finitely generated as a vertex algebra, and a presentation will require both infinitely many generators and infinitely many relations.

Isolated examples of this phenomenon have been known for many years (see for example \cite{BFH}\cite{EFH}\cite{DN}\cite{FKRW}\cite{KWY}), although the first general results of this kind were obtained in \cite{LII}, in the case where $\cV$ is the $\beta\gamma$-system $\cS(V)$ associated to the vector space $V = \mathbb{C}^n$. The full automorphism group of $\cS(V)$ preserving a natural conformal structure is $GL_n$. By a theorem of Kac-Radul \cite{KR}, $\cS(V)^{GL_n}$ is isomorphic to the vertex algebra $\cW_{1+\infty}$ with central charge $-n$. In \cite{LI} we showed that $\cW_{1+\infty,-n}$ has a minimal strong generating set consisting of $n^2+2n$ elements, and in particular is a $\cW$-algebra of type $\cW(1,2,\dots, n^2+2n)$. For an arbitrary reductive group $G\subset GL_n$, $\cS(V)^G$ decomposes as a direct sum of irreducible, highest-weight $\cW_{1+\infty,-n}$-modules. The strong finite generation of $\cW_{1+\infty,-n}$ implies a certain finiteness property of the modules appearing in $\cS(V)^G$. This property, together with a classical theorem of Weyl, yields the strong finite generation of $\cS(V)^G$. Using the same approach, we also proved in \cite{LII} that invariant subalgebras of $bc$-systems and $bc \beta\gamma$-systems are strongly finitely generated.

In \cite{LIII} we initiated a similar study of the invariant subalgebras of the rank $n$ Heisenberg vertex algebra $\cH(n)$. The full automorphism group of $\cH(n)$ preserving a natural conformal structure is the orthogonal group $O(n)$. Motivated by classical invariant theory, we conjectured that $\cH(n)^{O(n)}$ is a $\cW$-algebra of type $\cW(2,4,\dots,n^2+3n)$. For $n=1$, this was already known to Dong-Nagatomo \cite{DN}, and we proved it for $n=2$ and $n=3$. We also showed that this conjecture implies the strong finite generation of $\cH(n)^G$ for an arbitrary reductive group $G$; see Theorem 6.9 of \cite{LIII}. 

In this paper, we study invariant subalgebras of the universal affine vertex algebra $V_k(\gg,B)$ for a finite-dimensional Lie algebra $\gg$ equipped with a nondegenerate, symmetric, invariant bilinear form $B$. In the special case where $\gg$ is simple and $B$ is the normalized Killing form, it is customary to denote $V_k(\gg,B)$ by $V_k(\gg)$. Recall that $V_k(\gg,B)$ has generators $X^{\xi}$, which are linear in $\xi\in\gg$, and satisfy the OPE relations $$X^{\xi}(z) X^{\eta}(w) \sim k B(\xi, \eta)(z-w)^{-2} + X^{[\xi,\eta]}(w)(z-w)^{-1}.$$ Let $G$ be a reductive group of automorphisms of $V_k(\gg,B)$ for all $k\in \mathbb{C}$. Our main result is the following.

\begin{thm} \label{mainthm} For any $\gg$, $B$, and $G$, $V_k(\gg,B)^G$ is strongly finitely generated for generic values of $k$, i.e., for $k\in \mathbb{C}\setminus K$ where $K$ is at most countable. \end{thm}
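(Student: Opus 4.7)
The plan is to follow the strategy developed in \cite{LI,LII,LIII}: extract a (possibly infinite) strong generating set for $V_k(\gg,B)^G$ from classical invariant theory on the associated graded, then exploit the rational dependence of the OPE on $k$ together with decoupling relations to reduce to finite strong generation on a cofinite subset of $\C$.

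First, equip $V_k(\gg,B)$ with its standard good increasing filtration, in which each $X^\xi$ has weight $1$. The associated graded is canonically isomorphic, as a $G$-equivariant commutative vertex algebra with derivation, to the coordinate ring of the arc space
\[
\mathrm{gr}\, V_k(\gg,B) \;\cong\; \mathrm{Sym}\bigl(\gg^*\otimes t^{-1}\C[t^{-1}]\bigr) \;\cong\; \cO(J_\infty \gg^*),
\]
and this identification is independent of $k$. Since $G$ is reductive, taking invariants is exact and commutes with $\mathrm{gr}$, so $\mathrm{gr}(V_k(\gg,B)^G) \cong \cO(J_\infty \gg^*)^G$. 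By Hilbert's theorem $\cO(\gg^*)^G$ is finitely generated, and a differential-algebra argument (via Weyl's polarization principle when $G$ is a classical group, and by a more general argument otherwise) produces an at most countable set $\{s_\alpha\}\subset \cO(J_\infty \gg^*)^G$ that strongly generates this ring as a commutative vertex algebra. Each $s_\alpha$ lifts to $\widetilde{s}_\alpha \in V_k(\gg,B)^G$ with the same leading symbol, and $\{\widetilde{s}_\alpha\}$ is a strong generating set of $V_k(\gg,B)^G$ for every $k$.

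The core of the proof is reducing $\{\widetilde{s}_\alpha\}$ to a finite subset. Fix a conformal weight bound $d$ and let $T\subset\{\widetilde{s}_\alpha\}$ be the finite subset of generators of weight at most $d$. For each $\widetilde{s}_\alpha \notin T$, the classical identity expressing $s_\alpha$ as a differential polynomial in the leading symbols of $T$ lifts, by induction on filtration degree, to a relation
\[
c_\alpha(k)\,\widetilde{s}_\alpha \;=\; F_\alpha\bigl(k;\, T\bigr)
\]
in $V_k(\gg,B)^G$, where $F_\alpha$ is a normally ordered differential polynomial in elements of $T$ and $c_\alpha(k)$ is a rational function of $k$. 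Setting
\[
K \;=\; \bigcup_\alpha \bigl\{k\in\C \mid c_\alpha(k)=0\bigr\},
\]
which is at most countable as a union of finite zero sets of countably many rational functions, we conclude that $T$ is a finite strong generating set of $V_k(\gg,B)^G$ for every $k \in \C \setminus K$.

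The main obstacle is verifying that each $c_\alpha(k)$ is not identically zero and that a uniform weight bound $d$ can be chosen so that the decoupling process terminates. Nonvanishing of $c_\alpha(k)$ is most cleanly established by specializing to a limiting value of $k$ (for instance $k\to\infty$ after rescaling $X^\xi \mapsto k^{-1/2}X^\xi$) where $V_k(\gg,B)$ degenerates to a Heisenberg or free field vertex algebra whose invariants are already known to be strongly finitely generated, as in \cite{LIII}. Uniformity in $d$ then follows from a representation-theoretic analysis of how high-weight $G$-invariants on $J_\infty \gg^*$ arise as differential polynomials in a bounded set of low-weight invariants, ensuring that the obstructions $c_\alpha(k)$ remain nontrivial for all but countably many $k$.
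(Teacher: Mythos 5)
Your overall architecture is the right one --- it matches the paper's: view $\cH(n)^G$ and $V_k(\gg,B)^G$ as deformations of the same classical ring $(Sym\bigoplus_{j\geq 0}V_j)^G$, get a countable strong generating set from Weyl's polarization theorem, track the rational dependence of coefficients on $k$, and establish nonvanishing of the leading coefficients via the Heisenberg degeneration (each double contraction contributes a factor of $k$, so the top $k$-degree part of any relation in $V_k(\gg,B)^G$ is exactly the corresponding relation in $\cH_k(n)^G$). But there is a genuine gap at the central step. You assert that ``the classical identity expressing $s_\alpha$ as a differential polynomial in the leading symbols of $T$ lifts, by induction on filtration degree,'' to $c_\alpha(k)\,\widetilde{s}_\alpha = F_\alpha(k;T)$. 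No such classical identity exists: the ring $(Sym\bigoplus_{j\geq 0}V_j)^G$ is generally \emph{not} finitely generated as a $\partial$-ring, and if $S$ is chosen minimal then $(S\setminus S')\cap\bra S'\ket = \emptyset$ --- high-weight generators are precisely \emph{not} differential polynomials in the low-weight ones. This is the point the paper stresses repeatedly: decoupling is a purely quantum phenomenon. The relations one actually has classically come from the second fundamental theorem (relations among the generators, not expressions of one generator in terms of others), and the decoupling relations arise only because the quantum corrections to those relations happen to contain the excluded generator linearly with nonzero coefficient. So the relations $c_\alpha(k)\widetilde{s}_\alpha = F_\alpha$ must be \emph{imported} from the already-established strong finite generation of $\cH(n)^G$ (itself nontrivial: the paper needs its Theorem 5.12, since in the cited prior work this was conditional on the conjecture $R_n\neq 0$), not deduced from a classical limit.

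Even granting the Heisenberg input, a second gap remains: when you transfer a Heisenberg decoupling relation to $V_k(\gg,B)^G$, the single contractions (which carry no factor of $k$) produce lower-$k$-degree correction terms that involve \emph{other} generators outside your finite set $T$, so the relation you obtain is not yet of the form $c_\alpha(k)\widetilde{s}_\alpha = F_\alpha(k;T)$. The paper spends most of its proof eliminating these unwanted generators: it organizes them by degree $d_1 < \cdots < d_r$ and weight, sets up linear systems $\sum_j \mu_{i,j}(k)\widetilde{q}_{j} = R_i$, and shows the systems are invertible over $\mathbb{C}(k)$ because the diagonal entries have strictly larger $k$-degree than the off-diagonal ones --- a bookkeeping fact that follows from $(S\setminus S')\cap\bra S'\ket=\emptyset$ together with the count of factors of $k$ per contraction. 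Your proposal's final paragraph correctly identifies the $k\to\infty$ degeneration as the source of nonvanishing, but the termination of the elimination and the nonvanishing of these determinants is exactly the content that must be proved, and ``a representation-theoretic analysis'' is not a substitute for it. (A minor further point: no uniform weight bound $d$ is needed or available; the paper processes one weight at a time, excluding finitely many $k$ per weight, and takes a countable union.)
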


Note that when $\gg$ is abelian and $k\neq 0$, $V_k(\gg,B)^G \cong \cH(n)^G$ for $n=\text{dim}(\gg)$, so this result both improves and generalizes our earlier study of the vertex algebras $\cH(n)^G$. The proof of Theorem \ref{mainthm} is divided into three steps. The first step is to prove it in the special case where $\gg$ is abelian and $G= O(n)$. We will show that $\cH(n)^{O(n)}$ is of type $\cW(2,4,\dots, n^2+3n)$ for $n\leq 6$, and although we do not prove this conjecture in general, we will establish the strong finite generation of $\cH(n)^{O(n)}$ for all $n$. The second step (which is a minor modification of Theorem 6.9 of \cite{LIII}) is to show that the strong finite generation of $\cH(n)^{O(n)}$ implies the strong finite generation of $\cH(n)^G$ for an arbitrary reductive group $G$. The third step is to reduce the general case to the case where $\gg$ is abelian using a deformation argument.

Introduce a formal variable $\kappa$ satisfying $\kappa^2 = k$, and let $F$ denote the $\mathbb{C}$-algebra of rational functions in $\kappa$ of the form $\frac{p(\kappa)}{\kappa^d}$ where $\text{deg}(p) \leq d$. Let $\cV$ be the vertex algebra with coefficients in $F$ which is freely generated by $\{a^{\xi}|\ \xi\in\gg\}$, which satisfy the OPE relations
$$a^{\xi}(z) a^{\eta}(w) \sim B(\xi, \eta)(z-w)^{-2} + \frac{1}{\kappa} a^{[\xi,\eta]}(w)(z-w)^{-1}.$$ For each $k\neq 0$, we have an isomorphism $\cV / (\kappa - \sqrt{k}) \ra V_k(\gg,B)$ sending $a^{\xi_i} \mapsto \frac{1}{\sqrt{k}} X^{\xi_i}$, where $(\kappa - \sqrt{k})$ denotes the ideal generated by $\kappa - \sqrt{k}$. Moreover, the limit $\cV_{\infty} =\lim_{\kappa \ra \infty} \cV$ is a well-defined vertex algebra, and $\cV_{\infty} \cong \cH(n)$ since $B$ is nondegenerate.

Since the group $G$ acts on $V_k(\gg,B)$ for all $k$, $G$ also acts on $\cV$ and preserves the formal variable $\kappa$. We will see that $\cH(n)^G$ is a deformation of $V_k(\gg,B)^G$ in the sense that $$\lim_{\kappa \ra \infty} \cV^G \cong \cH(n)^G,\ \ \ \ \ \ \ \ V_k(\gg,B)^G \cong \cV^G / (\kappa - \sqrt{k}),\ \ \ \ \ \ \ \ k\neq 0.$$

Both $\cV^G$ and $\cH(n)^G$ possess good increasing filtrations, and we have isomorphisms of differential graded algebras $$\text{gr}(\cV^G) \cong F\otimes_{\mathbb{C}} (\text{Sym} \bigoplus_{j\geq 0} V_j)^G \cong F\otimes_{\mathbb{C}} \text{gr}(\cH(n)^G),\ \ \ \ \ \ \ \ V_j\cong \gg.$$
Here $R=(\text{Sym} \bigoplus_{j\geq 0} V_j)^G$ has a derivation $\partial$ defined by $\partial x_j = x_{j+1}$ for $x_j\in V_j$, and is naturally graded by degree and weight. Choose a generating set $S = \{s_i|\ i\in I\}$ for $R$ as a differential algebra. We may assume that $S$ consists of homogeneous elements and contains only finitely many elements in each weight. This set corresponds to strong generating sets $T = \{t_i|\ i\in I\}$ for $\cV^G$, and $U = \{u_i|\ i\in I\}$ for $\cH(n)^G$. Since $\cH(n)^{G}$ is strongly finitely generated, we may assume without loss of generality that there is a finite subset $U' = \{u_1,\dots, u_s\}\subset U$, which strongly generates $\cH(n)^G$. This implies that each $u\in U \setminus U'$ admits a {\it decoupling relation} \begin{equation} \label{decouphintro} u = P(u_1,\dots, u_s),\end{equation} where $P$ is a normally ordered polynomial in $u_1,\dots, u_s$ and their derivatives.

Given a subset $K\subset \mathbb{C}$ containing $0$, let $F_K$ be the $\mathbb{C}$-algebra of rational functions in $\kappa$ of the form $\frac{p(\kappa)}{q(\kappa)}$ where $\text{deg}(p) \leq \text{deg}(q)$, and all roots of $q$ lie in $K$. For all $k$ such that $\sqrt{k} \notin K$, we have \begin{equation} \label{specialk} (F_K \otimes_F \cV^G)/( \kappa - \sqrt{k}) \cong \cV^G /( \kappa - \sqrt{k})\cong V_k(\gg,B)^G.\end{equation} 
We will find a set $K$ which is at most countable, such that the decoupling relations (\ref{decouphintro}) can be used to construct analogous decoupling relations \begin{equation} \label{decoupvkintro} t = Q(t_1,\dots, t_s),\end{equation} in $F_K \otimes_F \cV^G$ for all $t\in T\setminus T'$. Here $Q$ is a normally ordered polynomial in $t_1,\dots, t_s$ and their derivatives, with coefficients in $F_K$. This implies that $T'$ is a strong generating set for $F_K \otimes_F \cV^G$. Finally, for all $k$ such that $\sqrt{k} \notin K$, the image of $T'$ under \eqref{specialk} strongly generates $V_k(\gg,B)^G$.

The idea of studying $V_k(\gg,B)^G$ via the simpler object $\cH(n)^G$ is analogous to studying classical invariant rings of the form $U(\gg)^G$ via the Poincar\'e-Birkhoff-Witt abelianization $\text{Sym}(\gg)^G \cong \text{gr}(U(\gg)^G) \cong \text{gr}(U(\gg))^G$. Here $U(\gg)$ denotes the universal enveloping algebra of $\gg$, and $G$ is a group of automorphisms of $U(\gg)$. A generating set for $U(\gg)^G$ can be obtained from a generating set for $\text{Sym}(\gg)^G$, and the leading term of a relation among the generators of $U(\gg)^G$ corresponds to a relation among the generators of $\text{Sym}(\gg)^G$. However, more subtle information about the structure and representation theory of $U(\gg)^G$ cannot be reconstructed in this way. Likewise, we can view $\cH(n)$ as a kind of partial abelianization of $V_k(\gg,B)$. By passing to the full abelianization $\text{gr}(V_k(\gg,B)) \cong \text{Sym} \bigoplus_{j\geq 0} V_j$, we destroy too much structure, and $\text{gr}(V_k(\gg,B))^G$ generally fails to be finitely generated. By contrast, $\cH(n)$ retains enough structure (notably, it is still a simple vertex algebra) so that strong generating sets for $\cH(n)^G$ and $V_k(\gg,B)^G$ are closely related. 

A {\it deformable $\cW$-algebra} is a family of vertex algebras $\cW_k$ depending on a parameter $k$, equipped with strong generating sets $A_k = \{a^k_1,\dots,a^k_r\}$, whose structure constants are continuous functions of $k$ with isolated singularities. The structure constants are just the coefficients of each normally ordered monomial in the elements of $A_k$ and their derivatives appearing in the OPE of $a^k_i(z) a^k_j(w)$, for $i,j=1,\dots,r$. It is customary in the physics literature to assume that one of the generators is a Virasoro element and the remaining generators are primary, but in our definition we do not assume that the remaining generators are primary. An easy consequence of Theorem \ref{mainthm} is the following.
\begin{cor} \label{maincor} For any $\gg$, $B$, and $G$ as above, there is a deformable $\cW$-algebra $\cW(\gg,B,G)_k$ which exists for all but finitely many values of $k$. For generic values of $k$, $\cW(\gg,B,G)_k=V_k(\gg,B)^G$.
\end{cor}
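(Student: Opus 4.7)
The plan is to realize all the vertex algebras $V_k(\gg,B)^G$ simultaneously as specializations of a single vertex algebra over $\C[k]$. The underlying vector space of $V(\gg,B):=V_k(\gg,B)$ has a $k$-independent PBW basis, the $G$-action is $k$-independent, and only the central term of the OPE depends on $k$, so $V(\gg,B)$ and its invariant subspace $V(\gg,B)^G$ acquire the structure of vertex algebras over $\C[k]$ with vertex operations polynomial in $k$. Each graded piece $V(\gg,B)^G[n]$ is a submodule of a finite rank free $\C[k]$-module, hence itself free of some rank $N_n$, since $\C[k]$ is a PID. Inverting polynomials yields a genuine vertex algebra $V(\gg,B)^G \otimes_{\C[k]} \C(k)$ over $\C(k)$.

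By Theorem \ref{mainthm}, fix some $k_0 \in \C \setminus K$ and a strong generating set $\tilde s_1,\dots,\tilde s_r$ of $V_{k_0}(\gg,B)^G$, and regard each $\tilde s_i$ as an element of $V(\gg,B)^G$ via its PBW expansion. In each conformal weight $n$, the finitely many normally ordered monomials of weight $n$ in the $\tilde s_i$ and their derivatives are vectors in the free $\C[k]$-module $V(\gg,B)^G[n]$; by strong generation at $k_0$, the matrix $A_n(k)$ expressing these monomials in a fixed $\C[k]$-basis has rank $N_n$ at $k=k_0$, hence rank $N_n$ over $\C(k)$. Thus the $\tilde s_i$ strongly generate $V(\gg,B)^G \otimes_{\C[k]} \C(k)$, and each singular term of the OPE $\tilde s_i(z)\tilde s_j(w)$ admits a unique expression as a $\C(k)$-linear combination of normally ordered monomials in the $\tilde s_l$ and their derivatives, with coefficients computable by Cramer's rule and hence rational in $k$.

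Only finitely many such structure constants arise (finitely many pairs $(i,j)$, finitely many singular orders bounded by the conformal weights of the $\tilde s_i$, and finitely many NOMs at each relevant weight), so the union $F\subset\C$ of their poles is finite. For each $k \in \C \setminus F$, specialize the structure constants to define $\cW(\gg,B,G)_k$ as the vertex algebra strongly generated by $s_1,\dots,s_r$ with those OPEs. All vertex algebra axioms reduce to rational identities in $k$ that are valid in the $\C(k)$-vertex algebra above, so they specialize at every $k \notin F$; by construction $\cW(\gg,B,G)_k$ is a deformable $\cW$-algebra. Finally, for every $k \in \C \setminus K$ Theorem \ref{mainthm} together with the specialized OPEs gives $\cW(\gg,B,G)_k \cong V_k(\gg,B)^G$, so equality holds at generic $k$. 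The main technical point is the rank computation upgrading strong generation at the single value $k_0$ to strong generation over $\C(k)$; thereafter Cramer's rule automatically produces rational structure constants with only finitely many combined poles.
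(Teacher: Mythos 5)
Your argument is correct in substance and reaches the corollary by the same overall architecture as the paper---there are only finitely many structure constants, each is a rational function of $k$, hence there is a finite set of poles outside of which the family is deformable, with equality $\cW(\gg,B,G)_k = V_k(\gg,B)^G$ at generic $k$---but your mechanism for the key rationality claim is genuinely different. The paper defines $\cW(\gg,B,G)_k \subset V_k(\gg,B)^G$ concretely, for \emph{every} $k$, as the subalgebra generated by the specific set $U'$ from the proof of Theorem \ref{mainthm}, and rationality of the structure constants is inherited from that proof, in which every quantum correction, elimination step, and inverted determinant is manifestly rational in $k$. You instead build a $\C[k]$-form of the invariant algebra (legitimate: the PBW basis and the $G$-action are $k$-independent, so each graded piece $V(\gg,B)^G[n]$ is free over $\C[k]$ of finite rank $N_n$) and upgrade strong generation at a \emph{single} generic point $k_0$ to strong generation over $\C(k)$ by semicontinuity of rank: the matrix $A_n(k)$ of normally ordered monomials has rank $N_n$ at $k_0$, hence rank $N_n$ over $\C(k)$. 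This generic linear algebra is not in the paper, and it buys you independence from the particular set $U'$: any strong generating set at any one non-exceptional $k_0$ suffices. One small correction: the expression of a singular OPE term in the monomials is \emph{not} unique, since the monomials satisfy nontrivial normally ordered relations (the paper emphasizes that these algebras are not freely generated); you must first fix a maximal $\C(k)$-linearly independent subset of monomials, after which Cramer's rule gives rational coefficients as you say.

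Two steps need repair, though neither is fatal. First, ``the vertex algebra strongly generated by $s_1,\dots,s_r$ with those OPEs'' is not a well-posed definition: generators together with OPEs do not determine a vertex algebra (distinct quotients share the same OPEs), and the vertex algebra axioms are not a list of identities in the structure constants alone. The fix is implicit in your own setup: let $f\in\C[k]$ clear all denominators, and define $\cW(\gg,B,G)_k$ for $f(k)\neq 0$ as the span of the specialized normally ordered monomials inside $V_k(\gg,B)^G$; the polynomial identities expressing the singular products $\tilde s_i\circ_n \tilde s_j$ in the monomials hold coefficientwise over $\C[k]$ and hence specialize wherever $f(k)\neq 0$, so this span is closed under the singular products and is therefore a vertex subalgebra strongly generated by the specialized $\tilde s_i$---which is exactly the paper's object. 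Second, your closing appeal to Theorem \ref{mainthm} is slightly misplaced: the theorem asserts strong generation of $V_k(\gg,B)^G$ by $U'$, not by your $\tilde s_i$. What actually gives $\cW(\gg,B,G)_k = V_k(\gg,B)^G$ generically is your own minor-nonvanishing argument: in each weight $n$ the chosen $N_n\times N_n$ minor of $A_n(k)$ vanishes at only finitely many $k$, so outside a countable set the specialized monomials span each $V_k(\gg,B)^G[n]$. With these adjustments your proof is complete and, apart from the different route to rationality, agrees with the paper's.
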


Perhaps the best-studied examples of deformable $\cW$-algebras are the algebras $\cW(\gg,f)_k$ associated to a simple, finite-dimensional Lie algebra $\gg$, and a nilpotent element $f \in \gg$ (see \cite{KRW} and references therein). A key property of $\cW(\gg,f)_k$ is that it is {\it freely generated}; there exists a strong finite generating set such that there are no nontrivial normally ordered polynomial relations among the generators and their derivatives \cite{DSK}. By contrast, $\cW(\gg,B,G)_k$ is generally {\it not} freely generated. To see this, let $\cA$ be a vertex algebra with conformal weight grading $\cA = \bigoplus_{m\geq 0} \cA[m]$, which is freely generated by fields $a_1,\dots, a_r$ of weights $w_1,\dots, w_r$. Then $\cA$ is linearly isomorphic to the polynomial algebra with generators $a_1,\dots, a_r$ and their derivatives, where $\text{wt}(\partial^k a_i) = w_i +k$, so $\cA$ has graded character $$\chi_{\cA}(q) =\sum_{m\geq 0}\text{dim}(\cA[m]) q^m = \prod_{i=1}^r \prod_{k\geq 0} \frac{1}{1-q^{w_i +k}}.$$ On the other hand, $V_k(\gg,B)^G$ is linearly isomorphic to $(\text{Sym} \bigoplus_{j\geq 0} V_j)^G$, where $V_j \cong \gg$ and has weight $j+1$. This generally does not have the graded character of a polynomial algebra.

The vertex algebras $\cW(\gg,B,G)_k$ form a new and rich class of deformable $\cW$-algebras, and it is important to study their representation theory as well as their structure. For generic values of $k$, $\cW(\gg,B,G)_k$ will be simple, but for special values of $k$ these algebras will possess nontrivial ideals. It is likely that for positive integer values of $k$, there exist new rational vertex algebras arising as quotients of vertex algebras of this kind.

\section{Vertex algebras}
In this section, we define vertex algebras, which have been discussed from various different points of view in the literature (see for example \cite{Bo}\cite{FLM}\cite{K}\cite{FBZ}). We will follow the formalism developed in \cite{LZ} and partly in \cite{LiI}. Let $V=V_0\oplus V_1$ be a super vector space over $\mathbb{C}$, and let $z,w$ be formal variables. By $\text{QO}(V)$, we mean the space of all linear maps $$V\rightarrow V((z)):=\{\sum_{n\in\mathbb{Z}} v(n) z^{-n-1}|
v(n)\in V,\ v(n)=0\ \text{for} \ n>>0 \}.$$ Each element $a\in \text{QO}(V)$ can be
uniquely represented as a power series
$$a=a(z):=\sum_{n\in\mathbb{Z}}a(n)z^{-n-1}\in \text{End}(V)[[z,z^{-1}]].$$ We
refer to $a(n)$ as the $n$th Fourier mode of $a(z)$. Each $a\in
\text{QO}(V)$ is assumed to be of the shape $a=a_0+a_1$ where $a_i:V_j\ra V_{i+j}((z))$ for $i,j\in\mathbb{Z}/2\mathbb{Z}$, and we write $|a_i| = i$.

On $\text{QO}(V)$ there is a set of nonassociative bilinear operations
$\circ_n$, indexed by $n\in\mathbb{Z}$, which we call the $n$th circle
products. For homogeneous $a,b\in \text{QO}(V)$, they are defined by
$$
a(w)\circ_n b(w)=\text{Res}_z a(z)b(w)~\iota_{|z|>|w|}(z-w)^n-
(-1)^{|a||b|}\text{Res}_z b(w)a(z)~\iota_{|w|>|z|}(z-w)^n.
$$
Here $\iota_{|z|>|w|}f(z,w)\in\mathbb{C}[[z,z^{-1},w,w^{-1}]]$ denotes the
power series expansion of a rational function $f$ in the region
$|z|>|w|$. We usually omit the symbol $\iota_{|z|>|w|}$ and just
write $(z-w)^{-1}$ to mean the expansion in the region $|z|>|w|$,
and write $-(w-z)^{-1}$ to mean the expansion in $|w|>|z|$. It is
easy to check that $a(w)\circ_n b(w)$ above is a well-defined
element of $\text{QO}(V)$.

The non-negative circle products are connected through the {\it
operator product expansion} (OPE) formula.
For $a,b\in \text{QO}(V)$, we have \begin{equation} \label{opeformula} a(z)b(w)=\sum_{n\geq 0}a(w)\circ_n
b(w)~(z-w)^{-n-1}+:a(z)b(w):\ ,\end{equation} which is often written as
$a(z)b(w)\sim\sum_{n\geq 0}a(w)\circ_n b(w)~(z-w)^{-n-1}$, where
$\sim$ means equal modulo the term $$
:a(z)b(w):\ =a(z)_-b(w)\ +\ (-1)^{|a||b|} b(w)a(z)_+.$$ Here
$a(z)_-=\sum_{n<0}a(n)z^{-n-1}$ and $a(z)_+=\sum_{n\geq
0}a(n)z^{-n-1}$. Note that $:a(w)b(w):$ is a well-defined element of
$\text{QO}(V)$. It is called the {\it Wick product} of $a$ and $b$, and it
coincides with $a\circ_{-1}b$. The other negative circle products
are related to this by
$$ n!~a(z)\circ_{-n-1}b(z)=\ :(\partial^n a(z))b(z):\ ,$$
where $\partial$ denotes the formal differentiation operator
$\frac{d}{dz}$. For $a_1(z),...,a_k(z)\in \text{QO}(V)$, the $k$-fold
iterated Wick product is defined to be
\begin{equation} \label{iteratedwick} :a_1(z)a_2(z)\cdots a_k(z):\ =\ :a_1(z)b(z):~,\end{equation}
where $b(z)=\ :a_2(z)\cdots a_k(z):\ $. We often omit the formal variable $z$ when no confusion can arise.

The set $\text{QO}(V)$ is a nonassociative algebra with the operations
$\circ_n$ and a unit $1$. We have $1\circ_n a=\delta_{n,-1}a$ for
all $n$, and $a\circ_n 1=\delta_{n,-1}a$ for $n\geq -1$. A linear subspace $\cA\subset \text{QO}(V)$ containing 1 which is closed under the circle products will be called a {\it quantum operator algebra} (QOA).
In particular $\cA$ is closed under $\partial$
since $\partial a=a\circ_{-2}1$. Many formal algebraic
notions are immediately clear: a homomorphism is just a linear
map that sends $1$ to $1$ and preserves all circle products; a module over $\cA$ is a
vector space $M$ equipped with a homomorphism $\cA\rightarrow
\text{QO}(M)$, etc. A subset $S=\{a_i|\ i\in I\}$ of $\cA$ is said to {\it generate} $\cA$ if any element $a\in\cA$ can be written as a linear
combination of nonassociative words in the letters $a_i$, $\circ_n$, for
$i\in I$ and $n\in\mathbb{Z}$. We say that $S$ {\it strongly generates} $\cA$ if any $a\in\cA$ can be written as a linear combination of words in the letters $a_i$, $\circ_n$ for $n<0$. Equivalently, $\cA$ is spanned by the collection $\{ :\partial^{k_1} a_{i_1}(z)\cdots \partial^{k_m} a_{i_m}(z):| ~i_1,\dots,i_m \in I,~ k_1,\dots,k_m \geq 0\}$. We say that $S$ {\it freely generates} $\cA$ if there are no nontrivial normally ordered polynomial relations among the generators and their derivatives.

We say that $a,b\in \text{QO}(V)$ {\it quantum commute} if $(z-w)^N
[a(z),b(w)]=0$ for some $N\geq 0$. Here $[,]$ denotes the super bracket. This condition implies that $a\circ_n b = 0$ for $n\geq N$, so (\ref{opeformula}) becomes a finite sum. A commutative QOA (CQOA) is a QOA whose elements pairwise quantum commute. Finally, the notion of a CQOA is equivalent to the notion of a vertex algebra. Every CQOA $\cA$ is itself a faithful $\cA$-module, called the {\it left regular
module}. Define
$$\rho:\cA\rightarrow \text{QO}(\cA),\ \ \ \ a\mapsto\hat a,\ \ \ \ \hat
a(\zeta)b=\sum_{n\in\mathbb{Z}} (a\circ_n b)~\zeta^{-n-1}.$$ Then $\rho$ is an injective QOA homomorphism,
and the quadruple of structures $(\cA,\rho,1,\partial)$ is a vertex
algebra in the sense of \cite{FLM}. Conversely, if $(V,Y,{\bf 1},D)$ is
a vertex algebra, the collection $Y(V)\subset \text{QO}(V)$ is a
CQOA. {\it We will refer to a CQOA simply as a
vertex algebra throughout the rest of this paper}.

The main examples we consider are the {\it universal affine vertex algebras} and their invariant subalgebras. Let $\gg$ be a finite-dimensional Lie algebra over $\mathbb{C}$, equipped with a nondegenerate, symmetric, invariant bilinear form $B$. The loop algebra $\gg[t,t^{-1}] = \gg\otimes \mathbb{C}[t,t^{-1}]$ has a one-dimensional central extension $\hat{\gg} = \gg[t,t^{-1}]\oplus \mathbb{C}\kappa$ determined by $B$, with bracket $$[\xi t^n, \eta t^m] = [\xi,\eta] t^{n+m} + n B(\xi,\eta) \delta_{n+m,0} \kappa,$$ and $\mathbb{Z}$-gradation $\text{deg}(\xi t^n) = n$, $\text{deg}(\kappa) = 0$. Let $\hat{\gg}_{\geq 0} = \bigoplus_{n\geq 0} \hat{\gg}_n$ where $\hat{\gg}_n$ denotes the subspace of degree $n$. For $k\in \mathbb{C}$, let $C_k$ be the one-dimensional $\hat{\gg}_{\geq 0}$-module on which $\xi t^n$ acts trivially for $n\geq 0$, and $\kappa$ acts by the $k$ times the identity. Define $V_k = U(\hat{\gg})\otimes_{U(\hat{\gg}_{\geq 0})} C_k$, and let $X^{\xi}(n)\in \text{End}(V_k)$ be the linear operator representing $\xi t^n$ on $V_k$. Define $X^{\xi} (z) = \sum_{n\in\mathbb{Z}} X^{\xi} (n) z^{-n-1}$, which is easily seen to lie in $\text{QO}(V_k)$ and satisfy the OPE relation $$X^{\xi}(z)X^{\eta} (w)\sim kB(\xi,\eta) (z-w)^{-2} + X^{[\xi,\eta]}(w) (z-w)^{-1} .$$ The vertex algebra $V_k(\gg,B)$ generated by $\{X^{\xi}| \ \xi \in\gg\}$ is known as the universal affine vertex algebra associated to $\gg$ and $B$ at level $k$. 

Suppose that $\gg$ is simple and $B$ is the normalized Killing form $\bra,\ket = \frac{1}{2 h^{\vee}} \bra ,\ket_K$. In this case, it is customary to denote $V_k(\gg,B)$ by $V_k(\gg)$. Fix an orthonormal basis $\{\xi_1,\dots,\xi_n\}$ for $\gg$ relative to $\bra,\ket$. For $k\neq - h^{\vee}$ where $h^{\vee}$ is the dual Coxeter number, $V_k(\gg)$ is a conformal vertex algebra with Virasoro element $$L(z) = \frac{1}{2(k+h^{\vee})} \sum_{i=1}^n :X^{\xi_i}(z) X^{\xi_i}(z):,$$ of central charge $\frac{k\cdot \text{dim}(\gg)}{k+h^{\vee}}$, such that each $X^{\xi_i}$ is primary of weight one. This Virasoro element is known as the {\it Sugawara conformal vector}, and it lies in $V_k(\gg)^G$ for any group $G$ of automorphisms of $V_k(\gg)$. At the critical level $k = - h^{\vee}$, $L(z)$ does not exist, but $V_k(\gg)$ still possesses a {\it quasi-conformal structure}; there is an action of the Lie subalgebra $\{L_n|~n\geq -1\}$ of the Virasoro algebra, such that $L_{-1}$ acts by translation, $L_0$ acts diagonalizably, and each $X^{\xi_i}$ has weight one. In fact, this quasiconformal structure exists on $V_k(\gg,B)$ for any $\gg$ and $B$, so we always have the weight grading $V_k(\gg,B) = \bigoplus_{m\geq 0} V_k(\gg,B)[m]$. Any group $G$ of automorphisms of $V_k(\gg,B)$ for all $k$ must act on $\gg$ and preserve both the bracket and the bilinear form, and we have the weight grading \begin{equation} \label{cwgr} V_k(\gg,B)^G = \bigoplus_{m\geq 0} V_k(\gg,B)^G[m].\end{equation}

Next, suppose that $\gg$ is abelian. Since $B$ is nondegenerate, $V_k(\gg,B)$ is just the rank $n$ Heisenberg vertex algebra $\cH(n)$. If we choose an orthonormal basis $\{\xi_1,\dots,\xi_n\}$ for $\gg$, $\cH(n)$ is generated by $\{\alpha^i = X^{\xi_i}|~i=1,\dots, n\}$, satisfying the OPE relations $$\alpha^i(z) \alpha^j(w) \sim \delta_{i,j} (z-w)^{-2}.$$ There is a Virasoro element $L(z) = \frac{1}{2} \sum_{i=1}^n :\alpha^i(z) \alpha^i(z):$ of central charge $n$, under which each $\alpha^i$ is primary of weight one.

\section{Category $\mathcal{R}$}
Let $\cR$ be the category of vertex algebras $\cA$ equipped with a $\mathbb{Z}_{\geq 0}$-filtration
\begin{equation} \cA_{(0)}\subset\cA_{(1)}\subset\cA_{(2)}\subset \cdots,\ \ \ \ \ \ \ \cA = \bigcup_{k\geq 0}
\cA_{(k)}\end{equation} such that $\cA_{(0)} = \mathbb{C}$, and for all
$a\in \cA_{(k)}$, $b\in\cA_{(l)}$, we have
\begin{equation} \label{goodi} a\circ_n b\in\cA_{(k+l)},\ \ \ \text{for}\
n<0,\end{equation}
\begin{equation} \label{goodii} a\circ_n b\in\cA_{(k+l-1)},\ \ \ \text{for}\
n\geq 0.\end{equation}
Elements $a(z)\in\cA_{(d)}\setminus \cA_{(d-1)}$ are said to have degree $d$.

Filtrations on vertex algebras satisfying (\ref{goodi})-(\ref{goodii})~were introduced in \cite{LiII}, and are known as {\it good increasing filtrations}. Setting $\cA_{(-1)} = \{0\}$, the associated graded object $\text{gr}(\cA) = \bigoplus_{k\geq 0}\cA_{(k)}/\cA_{(k-1)}$ is a $\mathbb{Z}_{\geq 0}$-graded associative, (super)commutative algebra with a
unit $1$ under a product induced by the Wick product on $\cA$. For each $r\geq 1$ we have the projection \begin{equation} \phi_r: \cA_{(r)} \ra \cA_{(r)}/\cA_{(r-1)}\subset \text{gr}(\cA).\end{equation} 
Moreover, $\text{gr}(\cA)$ has a derivation $\partial$ of degree zero
(induced by the operator $\partial = \frac{d}{dz}$ on $\cA$), and
for each $a\in\cA_{(d)}$ and $n\geq 0$, the operator $a\circ_n$ on $\cA$
induces a derivation of degree $d-k$ on $\text{gr}(\cA)$, which we denote by $a(n)$. Here $$k  = \text{sup} \{ j\geq 1|~ \cA_{(r)}\circ_n \cA_{(s)}\subset \cA_{(r+s-j)}~\forall r,s,n\geq 0\},$$ as in \cite{LL}. Finally, these derivations give $\text{gr}(\cA)$ the structure of a vertex Poisson algebra.

The assignment $\cA\mapsto \text{gr}(\cA)$ is a functor from $\cR$ to the category of $\mathbb{Z}_{\geq 0}$-graded (super)commutative rings with a differential $\partial$ of degree 0, which we will call $\partial$-rings. A $\partial$-ring is just an {\it abelian} vertex algebra, that is, a vertex algebra $\cV$ in which $[a(z),b(w)] = 0$ for all $a,b\in\cV$. A $\partial$-ring $A$ is said to be generated by a subset $\{a_i|~i\in I\}$ if $\{\partial^k a_i|~i\in I, k\geq 0\}$ generates $A$ as a graded ring. The key feature of $\cR$ is the following reconstruction property \cite{LL}:

\begin{lemma}\label{reconlem}Let $\cA$ be a vertex algebra in $\cR$ and let $\{a_i|~i\in I\}$ be a set of generators for $\text{gr}(\cA)$ as a $\partial$-ring, where $a_i$ is homogeneous of degree $d_i$. If $a_i(z)\in\cA_{(d_i)}$ are vertex operators such that $\phi_{d_i}(a_i(z)) = a_i$, then $\cA$ is strongly generated as a vertex algebra by $\{a_i(z)|~i\in I\}$.\end{lemma}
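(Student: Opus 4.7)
The plan is induction on the filtration degree $d$, aiming to show $\cA_{(d)} \subset \cB$ for every $d \geq 0$, where $\cB \subset \cA$ denotes the vertex subalgebra strongly generated by $\{a_i(z)\mid i\in I\}$; concretely, $\cB$ is the linear span of all iterated Wick products $:\partial^{k_1} a_{i_1}(z) \cdots \partial^{k_m} a_{i_m}(z):$ with $i_j\in I$, $k_j\geq 0$. Once this inclusion is established for every $d$, we get $\cA = \bigcup_{d\geq 0} \cA_{(d)} \subset \cB$, which is exactly the desired strong generation. The base case $d=0$ is immediate from $\cA_{(0)} = \mathbb{C}\cdot 1 \subset \cB$.

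For the inductive step, assume $\cA_{(d-1)} \subset \cB$ and take $b(z) \in \cA_{(d)}$. Its image $\phi_d(b(z))$ is homogeneous of degree $d$ in $gr(\cA)$, and since $\{a_i\}$ generates $gr(\cA)$ as a $\partial$-ring with $\partial$ of degree zero, we may write
$$\phi_d(b(z)) = \sum_\alpha c_\alpha\, \partial^{k_1^\alpha} a_{i_1^\alpha} \cdots \partial^{k_{m_\alpha}^\alpha} a_{i_{m_\alpha}^\alpha},$$
a finite sum in which each monomial satisfies $d_{i_1^\alpha} + \cdots + d_{i_{m_\alpha}^\alpha} = d$. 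The natural lift
$$b'(z) \;:=\; \sum_\alpha c_\alpha :\partial^{k_1^\alpha} a_{i_1^\alpha}(z) \cdots \partial^{k_{m_\alpha}^\alpha} a_{i_{m_\alpha}^\alpha}(z):$$
belongs to $\cB$ by construction. If we can show that $b'(z)\in \cA_{(d)}$ and that $\phi_d(b'(z)) = \phi_d(b(z))$, then $b(z) - b'(z) \in \cA_{(d-1)} \subset \cB$ by the inductive hypothesis, forcing $b(z) \in \cB$ and completing the step.

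The main obstacle, and really the crux of the argument, is establishing these two compatibilities. Iterated application of (\ref{goodi}), combined with the observation that $\partial$ preserves the filtration degree (since $\partial a = a\circ_{-2} 1$ and $1\in\cA_{(0)}$), places each iterated Wick product in the correct filtered piece, so $b'(z) \in \cA_{(d)}$. Condition (\ref{goodii}) then ensures that positive circle products strictly drop the filtration degree; consequently the Wick product on $\cA$ descends to exactly the commutative product on $gr(\cA)$, and together with the compatibility of $\partial$ between $\cA$ and $gr(\cA)$ this yields $\phi_d(b'(z)) = \phi_d(b(z))$ in $\cA_{(d)}/\cA_{(d-1)}$. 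Both facts are baked into the definition of a good increasing filtration, so no additional vertex-algebraic input beyond what is recorded earlier in the section is required; the remainder of the argument is bookkeeping in the graded ring $gr(\cA)$.
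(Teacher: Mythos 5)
Your proof is correct and is essentially the standard argument: the paper itself gives no proof of Lemma \ref{reconlem} but defers to \cite{LL}, where the proof is exactly this induction on filtration degree, lifting the homogeneous degree-$d$ expression in $gr(\cA)$ to a normally ordered polynomial via the multiplicativity of $\phi_d$ (which rests on (\ref{goodi})--(\ref{goodii}), as you note) and absorbing the difference into $\cA_{(d-1)}$. No gaps; your handling of the two compatibilities ($b'(z)\in\cA_{(d)}$ and $\phi_d(b'(z))=\phi_d(b(z))$) is precisely the content of the cited reconstruction argument.
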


As shown in \cite{LI}, there is a similar reconstruction property for kernels of surjective morphisms in $\cR$. Let $f:\cA\rightarrow \cB$ be a morphism in $\cR$ with kernel $\cJ$, such that $f$ maps $\cA_{(k)}$ onto $\cB_{(k)}$ for all $k\geq 0$. The kernel $J$ of the induced map $\text{gr}(f): \text{gr}(\cA)\rightarrow \text{gr}(\cB)$ is a homogeneous $\partial$-ideal (i.e., $\partial J \subset J$). A set $\{a_i|~i\in I\}$ such that $a_i$ is homogeneous of degree $d_i$ is said to generate $J$ as a $\partial$-ideal if $\{\partial^k a_i|~i\in I,~k\geq 0\}$ generates $J$ as an ideal.

\begin{lemma} \label{idealrecon} Let $\{a_i| i\in I\}$ be a generating set for $J$ as a $\partial$-ideal, where $a_i$ is homogeneous of degree $d_i$. Then there exist vertex operators $a_i(z)\in \cA_{(d_i)}$ with $\phi_{d_i}(a_i(z)) = a_i$, such that $\{a_i(z)|~i\in I\}$ generates $\cJ$ as a vertex algebra ideal.\end{lemma}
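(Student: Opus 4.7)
The strategy is to lift the given generators $a_i$ of $J$ to vertex operators in $\cJ \cap \cA_{(d_i)}$ with the correct symbols, and then to use induction on the filtration degree, together with the fact that the $a_i$ generate $J$ as a $\partial$-ideal, to show these lifts generate $\cJ$ as a vertex algebra ideal.

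For the lifting step, pick any $\tilde a_i(z)\in \cA_{(d_i)}$ with $\phi_{d_i}(\tilde a_i(z)) = a_i$. Because $gr(f)(a_i)=0$, the image $f(\tilde a_i(z))$ already lies in $\cB_{(d_i-1)}$. The hypothesis that $f$ maps $\cA_{(d_i-1)}$ onto $\cB_{(d_i-1)}$ then supplies some $c_i\in \cA_{(d_i-1)}$ with $f(c_i) = f(\tilde a_i(z))$; setting $a_i(z) = \tilde a_i(z) - c_i$ yields an element of $\cJ \cap \cA_{(d_i)}$ whose symbol is still $a_i$, since subtracting $c_i$ only modifies the part of filtration degree $\leq d_i-1$.

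Let $\cJ'\subset \cJ$ be the vertex algebra ideal of $\cA$ generated by $\{a_i(z)\mid i\in I\}$. I will show $\cJ\cap\cA_{(k)}\subset \cJ'$ by induction on $k\geq 0$; the base case reduces to $\cJ\cap \C = 0$, which holds whenever $\cJ$ is proper. For the inductive step, let $b(z)\in \cJ\cap \cA_{(k)}$. Its symbol $\phi_k(b(z))\in J$ is homogeneous of degree $k$, so by hypothesis it can be written as a finite sum
\[ \phi_k(b(z)) = \sum_l \alpha_l\cdot \partial^{j_l} a_{i_l}, \]
with $\alpha_l\in gr(\cA)$ homogeneous of degree $k-d_{i_l}$. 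Choosing lifts $\alpha_l(z)\in \cA_{(k-d_{i_l})}$ of $\alpha_l$ and setting
\[ c(z) = \sum_l\, :\alpha_l(z)\,\partial^{j_l} a_{i_l}(z):, \]
we obtain an element of $\cJ' \cap \cA_{(k)}$. Condition (\ref{goodi}) says the Wick product on $\cA$ induces the ring product on $gr(\cA)$, so $\phi_k(c(z)) = \phi_k(b(z))$, whence $b(z) - c(z)\in \cJ\cap\cA_{(k-1)}$, and by the inductive hypothesis $b(z)-c(z)\in \cJ'$. Therefore $b(z)\in \cJ'$.

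The main subtlety lies in the lifting step: a generic preimage of $a_i$ need not sit inside $\cJ$, and the hypothesis that $f$ is surjective on each filtered piece is precisely what allows us to correct it by a lower-filtration term without disturbing its symbol. Once the lifts are in place, the inductive argument is a formal consequence—parallel in spirit to Lemma~\ref{reconlem}—of the compatibility between the Wick product and the graded-ring multiplication on $gr(\cA)$ built into the definition of a good increasing filtration.
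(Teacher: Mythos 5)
Your proof is correct, and it is essentially the argument the paper relies on: the paper does not reprove this lemma but defers to \cite{LI}, where the proof proceeds exactly as you do --- correcting an arbitrary lift $\tilde a_i(z)$ by a lower-filtration preimage (using the surjectivity of $f$ on each filtered piece) to land in $\cJ$ without changing the symbol, and then showing $\cJ \cap \cA_{(k)} \subset \cJ'$ by induction on $k$, subtracting a normally ordered lift of a decomposition of $\phi_k(b(z))$ in terms of the $\partial^{j}a_i$. Your two observations --- that a generic preimage need not lie in $\cJ$, and that the base case holds since $f(1)=1$ forces $\cJ\cap\mathbb{C}=0$ --- are exactly the points the cited argument turns on.
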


For any Lie algebra $\gg$ and bilinear form $B$, $V_k(\gg,B)$ admits a good increasing filtration \begin{equation} \label{filto} V_k(\gg,B)_{(0)}\subset V_k(\gg,B)_{(1)}\subset \cdots,\ \ \ \ \ \ \ \ V_k(\gg,B) = \bigcup_{j\geq 0} V_k(\gg,B)_{(j)},\end{equation} where $V_k(\gg,B)_{(j)}$ is spanned by iterated Wick products of the generators $X^{\xi_i}$ and their derivatives, of length at most $j$. Since $V_k(\gg,B)$ is freely generated by $\{X^{\xi_i}\}$, we have a linear isomorphism $V_k(\gg,B) \cong \text{gr}(V_k(\gg,B))$, and an isomorphism of $\partial$-rings \begin{equation}\label{assgrad} \text{gr}(V_k(\gg,B)) \cong \text{Sym}\bigoplus_{j\geq 0} V_j,\ \ \ \ \ \ \ \ V_j\cong \gg. \end{equation} Here $V_j$ has a basis $\{\xi_{i,j}\}$ corresponding to $\{\partial^j X^{\xi_i}\}$, and $\text{Sym} \bigoplus_{j\geq 0} V_j$ has the $\partial$-ring structure defined by $\partial \xi_{i,j} = \xi_{i,j+1}$. For any reductive group $G$ of automorphisms of $V_k(\gg,B)$, this filtration is $G$-invariant and is inherited by $V_k(\gg,B)^G$. We obtain a linear isomorphism $V_k(\gg,B)^G \cong \text{gr}(V_k(\gg,B)^G)$ and isomorphisms of $\partial$-rings \begin{equation} \label{currentringiso}\text{gr}(V_k(\gg,B))^G \cong \text{gr}(V_k(\gg,B)^G) \cong (\text{Sym} \bigoplus_{j\geq 0} V_j)^G. \end{equation} Finally, the weight grading \eqref{cwgr} is inherited by $\text{gr}(V_k(\gg,B)^G)$ and \eqref{currentringiso} preserves weight as well as degree, where $\text{wt}(\xi_{i,j}) = j+1$.

\section{Some classical invariant theory \label{secivt}}

We recall some facts about invariant rings of the form $R = (\text{Sym} \bigoplus_{j\geq 0} V_j)^G$, where each $V_j$ is isomorphic to some fixed $G$-module $V$. In the terminology of Weyl, a {\it first fundamental theorem of invariant theory} for the pair $(G,V)$ is a set of generators for $R$. In some treatments, a first fundamental theorem is defined as a set of generators for the larger ring $(\text{Sym} \bigoplus_{j\geq 0} (V_j \oplus V^*_j))^G$, where $V^*_j\cong V^*$ as $G$-modules, but here we only need to consider $R$. A {\it second fundamental theorem of invariant theory} for $(G,V)$ is a set of generators for the ideal of relations among the generators of $R$. 

First and second fundamental theorems of invariant theory are known for the standard representations of the classical groups \cite{W} and for the adjoint representations of the classical groups \cite{P}, but in general it is quite difficult to describe these rings explicitly. However, there is an important finiteness theorem due to Weyl (Theorem 2.5A of \cite{W}). For all $p\geq 0$, there is an action of $GL_p$ on $\bigoplus_{j =0}^{p-1} V_j $ which commutes with the action of $G$. The inclusions $GL_p\hookrightarrow GL_q$ for $p<q$ sending $$M \rightarrow  \bigg[ \begin{matrix}M & 0 \cr 0 & I_{q-p} \end{matrix} \bigg]$$ induce an action of $GL_{\infty} = \lim_{p\rightarrow \infty} GL_p$ on $\bigoplus_{j\geq 0} V_j$. We obtain an action of $GL_{\infty}$ on $\text{Sym} \bigoplus_{j\geq 0} V_j$ which commutes with the action of $G$, so $GL_{\infty}$ acts on $R$ as well. The elements $\sigma \in GL_{\infty}$ are known as {\it polarization operators}, and given $f\in R$, $\sigma f$ is known as a polarization of $f$. 

\begin{thm} \label{weylfinite} (Weyl) $R$ is generated by the set of polarizations of any set of generators for $(\text{Sym} \bigoplus_{j = 0} ^{n-1} V_j)^G$, where $n=\text{dim}(V)$. Since $G$ is reductive, $(\text{Sym} \bigoplus_{j = 0} ^{n-1} V_j)^G$ is finitely generated, so there exists a finite subset $\{f_1,\dots, f_r\}\subset R$ whose polarizations generate $R$. \end{thm}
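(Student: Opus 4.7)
The plan is to recognize $Sym \bigoplus_{j \geq 0} V_j$ as $Sym(V \otimes U)$, where $U$ is the countable-dimensional vector space with basis $e_0, e_1, \ldots$ and $V \otimes e_j$ is identified with $V_j$. The group $GL_\infty = GL(U)$ acts on $U$, hence on $V \otimes U$ and on $Sym(V \otimes U)$, and this action commutes with the $G$-action on $V$; it is precisely the polarization action on $R$.

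By Cauchy's formula (equivalently, Schur--Weyl duality), there is a $GL(V) \times GL(U)$-equivariant decomposition
$$Sym(V \otimes U) \;\cong\; \bigoplus_{\lambda} \mathbb{S}^\lambda V \otimes \mathbb{S}^\lambda U,$$
where the sum runs over all partitions $\lambda$. Since $\dim V = n$, the Schur functor $\mathbb{S}^\lambda V$ vanishes when $\ell(\lambda) > n$, so only partitions of length at most $n$ contribute. Taking $G$-invariants,
$$R \;\cong\; \bigoplus_{\ell(\lambda) \leq n} (\mathbb{S}^\lambda V)^G \otimes \mathbb{S}^\lambda U.$$
For each such $\lambda$, the irreducible $GL_\infty$-module $\mathbb{S}^\lambda U$ is generated under polarization from its nonzero subspace $\mathbb{S}^\lambda U_0$, where $U_0 := \text{span}(e_0,\ldots,e_{n-1})$: any linear extension $U_0 \hookrightarrow U$ lifts to $GL_\infty$ and realizes every vector of $\mathbb{S}^\lambda U$ as a polarization of something in $\mathbb{S}^\lambda U_0$. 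Consequently the subspace $R_n := (Sym \bigoplus_{j=0}^{n-1} V_j)^G \cong \bigoplus_{\ell(\lambda) \leq n} (\mathbb{S}^\lambda V)^G \otimes \mathbb{S}^\lambda U_0$ generates $R$ as a vector space under the polarization action.

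To pass from vector-space generation to ring generation, I would apply Hilbert's finiteness theorem (valid since $G$ is reductive) to obtain a finite set $\{f_1, \ldots, f_r\}$ of ring generators for $R_n$. For any $f \in R$, the previous step gives an expression $f = \sum_i c_i \sigma_i(g_i)$ with $g_i \in R_n$ and $\sigma_i \in GL_\infty$. Writing $g_i = P_i(f_1, \ldots, f_r)$ and using that each $\sigma_i$ acts as a ring automorphism of $R$, one obtains $f = \sum_i c_i P_i(\sigma_i f_1, \ldots, \sigma_i f_r)$, which is a polynomial in the polarizations of the $f_j$, as required.

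The main obstacle is making sense of the Cauchy decomposition when $U$ is infinite-dimensional. This is a cosmetic issue, since every multihomogeneous component of $Sym \bigoplus_j V_j$ involves only finitely many $V_j$, so one may work inside $Sym(V \otimes U_N)$ with $U_N := \text{span}(e_0,\ldots,e_{N-1})$ for $N$ large, apply the classical finite-dimensional Cauchy identity there, and observe that $\mathbb{S}^\lambda U_N$ is generated from $\mathbb{S}^\lambda U_0$ by $GL(U_N) \subset GL_\infty$ whenever $N \geq n \geq \ell(\lambda)$. The full statement then follows by taking colimits in $N$ componentwise, avoiding any genuinely infinite-dimensional representation theory.
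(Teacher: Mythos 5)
Your proof is correct, but it does not follow the paper's route, because the paper gives no proof at all: Theorem \ref{weylfinite} is quoted from II.5 Theorem 2.5A of Weyl's \emph{The Classical Groups} \cite{We}, and Weyl's own argument there is the classical one based on the Capelli identity (the Capelli--Deruyts expansion), which shows by an explicit differential-operator computation that any invariant depending on more than $n = \dim(V)$ vector variables lies in the algebra generated by polarizations of invariants of at most $n$ of them. Your argument is instead the modern representation-theoretic proof: decompose $Sym(V\otimes U)$ by Cauchy's formula into $\bigoplus_\lambda \mathbb{S}^\lambda V\otimes \mathbb{S}^\lambda U$, observe that $\mathbb{S}^\lambda V = 0$ for $\ell(\lambda)>n$ so the bound $n$ becomes manifest, use irreducibility of $\mathbb{S}^\lambda U_N$ under $GL(U_N)$ to see that the $GL_\infty$-translates of $\mathbb{S}^\lambda U_0$ span, and then upgrade vector-space generation to ring generation using that $GL_\infty$ acts by algebra automorphisms commuting with $G$ (which the paper does set up in Section 4). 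Each approach has its merits: Weyl's is constructive, exhibiting the specific polarization operators that effect the reduction, while yours is shorter and conceptually cleaner, isolates exactly where reductivity enters (only through Hilbert's finiteness theorem for the invariants of the finitely many variables $\bigoplus_{j=0}^{n-1}V_j$; the identification $(\mathbb{S}^\lambda V\otimes\mathbb{S}^\lambda U)^G = (\mathbb{S}^\lambda V)^G\otimes\mathbb{S}^\lambda U$ needs nothing, as $G$ acts trivially on $U$), and your truncation to $U_N$ with componentwise colimits correctly disposes of the infinite-dimensionality of $U$. Both arguments are genuinely characteristic-zero (Cauchy's formula in this multiplicity-free form fails in positive characteristic), consistent with the paper working over $\mathbb{C}$.

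Two small points of hygiene, neither a gap. First, your phrase that any vector of $\mathbb{S}^\lambda U$ is realized ``as a polarization of something in $\mathbb{S}^\lambda U_0$'' overstates what irreducibility gives: a general vector is only a \emph{linear combination} of translates $\sigma w$ with $w\in\mathbb{S}^\lambda U_0$; since your subsequent step writes $f=\sum_i c_i\sigma_i(g_i)$, you in fact only use the span version, so the argument is unaffected. Second, the theorem asks for \emph{homogeneous} $f_1,\dots,f_r$; since $(Sym\bigoplus_{j=0}^{n-1}V_j)^G$ is graded and the $GL_\infty$-action preserves degree, a finite generating set may of course be chosen homogeneous, but you should say so.
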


The ring $R$ is naturally graded by degree and weight, where $x_j \in V_j$ has degree one and weight $j+1$. The derivation $\partial$ on $R$ defined by $\partial x_j = x_{j+1}$ is therefore homogeneous of degree zero and weight one. For our purposes, we need a set $S = \{s_i|\ i\in I\}$ of generators for $R$ as a $\partial$-ring, so that $\{\partial^j s_i|\ j\geq 0,\ i\in I\}$ generates $R$ as a ring. We may assume that $S$ consists of polarizations of a finite set $\{f_1,\dots, f_r\}$, each $s_i$ is homogeneous with respect to degree and weight, and $S$ contains only finitely many elements in each weight.

Let $\cV$ be a vertex algebra with a weight grading $\cV = \bigoplus_{m\geq 0} \cV[m]$, an action of $G$ by automorphisms, and a $G$-invariant good increasing filtration such that $\text{gr}(\cV)^G \cong \text{gr}(\cV^G) \cong R$ as $\partial$-rings graded by both degree and weight. Choose a generating set $S = \{s_i|\ i\in I\}$ for $R$ as a $\partial$-ring, where $s_i$ has degree $d_i$ and weight $w_i$. By Lemma \ref{reconlem}, we can find a strong generating set $T = \{t_i|\ i\in I\}$ for $\cV^G$ such that $s_i = \phi_{d_i}(t_i)$, where $$\phi_{d_i}: (\cV^G)_{(d_i)} \rightarrow (\cV^G)_{(d_i)} /(\cV^{G})_{(d_i-1)} \subset \text{gr}(\cV^G)$$ is the usual projection.

Given a homogeneous polynomial $p\in R$ of degree $d$, a {\it normal ordering} of $p$ will be a choice of normally ordered polynomial $P\in (\cV^G)_{(d)}$, obtained by replacing each $s_i\in S$ by $t_i\in T$, and replacing ordinary products with iterated Wick products of the form (\ref{iteratedwick}). Of course $P$ is not unique, but for any choice of $P$ we have $\phi_{d}(P) = p$.

Suppose that $p$ is a relation among the generators of $R$ coming from the second fundamental theorem for $(G,V)$, which we may assume to be homogeneous of degree $d$. Let $P^d \in \cV^G$ be some normal ordering of $p$. Since $\text{gr}(\cV^G)\cong R$ as graded rings, it follows that $P^d$ lies in $(\cV^G)_{(d-1)}$. The polynomial $\phi_{d-1} (P^d) \in R$ is homogeneous of degree $d-1$; if it is nonzero, it can be expressed as a polynomial in the variables $s_i\in S$ and their derivatives. Choose some normal ordering of this polynomial, and call this vertex operator $-P^{d-1}$. Then $P^{d} + P^{d-1}$ has the property that $$ \phi_d (P^d + P^{d-1}) = p,\ \ \ \ \ P^d + P^{d-1} \in (\cV^G)_{(d-2)}.$$ Continuing this process, we arrive at a vertex operator \begin{equation} \label{quantumcorr} P=\sum_{k=1}^{d} P^{k} \in \cV^G\end{equation} which is identically zero. We view $P$ as a quantum correction of the relation $p$, and it is easy to see by induction on degree that all normally ordered polynomial relations in $\cV^G$ among the elements of $T$ and their derivatives, are consequences of relations of this kind.

In general, $R$ is not finitely generated as a $\partial$-ring, but since the relations (\ref{quantumcorr}) are more complicated than their classical counterparts, it is possible for $\cV^G$ to be strongly generated as a vertex algebra by a finite subset $T'\subset T$. For this to happen, each element $t \in T\setminus T'$ must admit a \lq\lq decoupling relation" expressing it as a normally ordered polynomial in the elements of $T'$ and their derivatives. Given a relation in $\cV^G$ of the form (\ref{quantumcorr}), suppose that some $t \in T \setminus T'$ appears in $P^k$ for some $k<d$, with nonzero coefficient. If the remaining terms in (\ref{quantumcorr}) only depend on the elements of $T'$ and their derivatives, we can solve for $t$ to obtain such a decoupling relation. The existence of a complete set of decoupling relations for all $t \in T \setminus T'$ is a subtle and nonclassical phenomenon, since it depends on precise properties of the quantum corrections appearing in relations of the form (\ref{quantumcorr}).

\section{The structure of $\cH(n)^{O(n)}$}
The full automorphism group of $\cH(n)$ is the orthogonal group $O(n)$, and for any reductive group $G\subset O(n)$, $\cH(n)^G$ is governed by $\cH(n)^{O(n)}$ in the sense that it decomposes as a direct sum of irreducible, highest-weight $\cH(n)^{O(n)}$-modules. In this section, we recall the notation and main results of \cite{LIII} on the structure of $\cH(n)^{O(n)}$. 

Recall that $\cH(n)\cong \text{gr}(\cH(n))$ as vector spaces, and $\text{gr}(\cH(n))\cong \text{Sym} \bigoplus_{j\geq 0} V_j$ as $\partial$-rings. Here $V_j$ is spanned by $\{x_{i,j} |~ i=1,\dots,n\}$ where $x_{i,j}$ corresponds to $\partial^j \alpha^i$, so $V_j\cong \mathbb{C}^n$ as $O(n)$-modules for all $j\geq 0$. We have a linear isomorphism $\cH(n)^{O(n)}\cong \text{gr}(\cH(n)^{O(n)})$, and isomorphisms of $\partial$-rings \begin{equation} \label{structureofgrsinv}\text{gr}(\cH(n)^{O(n)} )\cong (\text{gr}(\cH(n)))^{O(n)} \cong (\text{Sym} \bigoplus_{j\geq 0} V_j)^{O(n)}.\end{equation} The following classical result of Weyl (Theorems 2.9A and 2.17A of \cite{W}) describes the generators and relations of the ring $(\text{Sym} \bigoplus_{j\geq 0} V_j)^{O(n)}$.

\begin{thm} \label{weylfft} For $j\geq 0$, let $V_j$ be the copy of the standard $O(n)$-module $\mathbb{C}^n$ with orthonormal basis $\{x_{i,j}| ~i=1,\dots,n\}$. The ring $R=(\text{Sym} \bigoplus_{j\geq 0} V_j)^{O(n)}$ is generated by the quadratics \begin{equation} \label{weylgenerators} q_{a,b} = \sum_{i=1}^n x_{i,a} x_{i,b},\ \ \ \ \ \ 0\leq a\leq b.\end{equation} For $a>b$, define $q_{a,b} = q_{b,a}$, and let $\{Q_{a,b}|\ a,b\geq 0\}$ be commuting indeterminates satisfying $Q_{a,b} = Q_{b,a}$ and no other algebraic relations. The kernel $I_n$ of the homomorphism $\mathbb{C}[Q_{a,b}]\ra (\text{Sym} \bigoplus_{j\geq 0} V_j)^{O(n)}$ sending $Q_{a,b}\mapsto q_{a,b}$ is generated by the $(n+1)\times (n+1)$ determinants \begin{equation} \label{weylrel} d_{I,J} = \left[\begin{matrix} Q_{i_0,j_0} & \cdots & Q_{i_0,j_n} \cr  \vdots  & & \vdots  \cr  Q_{i_n,j_0}  & \cdots & Q_{i_n,j_n} \end{matrix} \right].\end{equation} Here $I=(i_0,\dots, i_{n})$ and $J = (j_0,\dots, j_{n})$ are lists of integers satisfying \begin{equation} \label{ijineq}0\leq i_0<\cdots <i_n,\ \ \ \ \ \ \ \ 0\leq j_0<\cdots <j_n.\end{equation} \end{thm}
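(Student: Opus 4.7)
The plan is to prove the first fundamental theorem (generation of $R = (Sym \bigoplus_{j\geq 0} V_j)^{O(n)}$ by the $q_{a,b}$) and the second fundamental theorem (the relations are generated by the $(n+1)\times(n+1)$ minors $d_{I,J}$) separately, following the classical approach of Chapter II of \cite{We}.

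For the FFT, I would first appeal to Theorem \ref{weylfinite}: since the polarizations of a diagonal quadratic $q_{a,a}$ span precisely the $q_{a,b}$, it suffices to produce a generating set for $R^{(p)} := (Sym \bigoplus_{j=0}^{p-1} V_j)^{O(n)}$ consisting of the $q_{a,b}$ with $0 \leq a \leq b \leq p-1$, for some (equivalently every) $p \geq n$. Identifying $\bigoplus_{j=0}^{p-1} V_j \cong \mathrm{Mat}_{n,p}$, consider the $O(n)$-invariant morphism $\psi: \mathrm{Mat}_{n,p} \to Sym^2(\mathbb{C}^p)^*$, $A \mapsto A^T A$. Since every complex symmetric $p \times p$ matrix of rank at most $n$ has the form $A^T A$ (diagonalize the form over $\mathbb{C}$), the image is the determinantal variety $X^{(p)}_n$ of symmetric matrices of rank $\leq n$. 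The fibers of $\psi$ over points of maximal rank are single $O(n)$-orbits, so $\psi$ realizes the categorical quotient $\mathrm{Mat}_{n,p} /\!/ O(n) \cong X^{(p)}_n$, and $R^{(p)} = \mathbb{C}[X^{(p)}_n]$ is generated by $\psi^*(Q_{a,b}) = q_{a,b}$.

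For the SFT, the isomorphism $R^{(p)} \cong \mathbb{C}[X^{(p)}_n]$ means that the kernel $I^{(p)}_n$ of $\mathbb{C}[Q_{a,b}]_{0 \leq a,b \leq p-1} \to R^{(p)}$ is exactly the vanishing ideal of $X^{(p)}_n$. The $(n+1) \times (n+1)$ minors plainly lie in $I^{(p)}_n$ and cut out $X^{(p)}_n$ set-theoretically, so one must show that the determinantal ideal $J^{(p)}_n \subset \mathbb{C}[Q_{a,b}]$ that they generate is already \emph{prime} and of the correct Krull dimension $np - \binom{n}{2}$; the Nullstellensatz then forces $J^{(p)}_n = I^{(p)}_n$. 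Taking the direct limit $p \to \infty$, which commutes with both $O(n)$-invariants and passage to the minor ideal, produces the stated description of $I_n$.

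The main obstacle is precisely the primeness of the symmetric determinantal ideal $J^{(p)}_n$. In the generic (non-symmetric) case the analogous statement admits a short proof via the Eagon--Northcott complex, but the symmetric version requires the considerably more delicate apparatus of standard monomial theory -- specifically, the Hodge/doset algebra structure on $\mathbb{C}[Q_{a,b}]/J^{(p)}_n$ constructed by De Concini and Procesi, whose doset-standard monomial basis makes reducedness manifest. Granting this input, the remaining steps (polarization via Theorem \ref{weylfinite}, surjectivity of $\psi$ onto $X^{(p)}_n$, orbit analysis to identify $\psi^*$ as an isomorphism, and the direct limit) are routine.
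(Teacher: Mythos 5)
The paper does not prove this statement at all: it is quoted as a classical theorem of Weyl, with \cite{We} as the only justification, so the relevant comparison is with Weyl's original argument rather than with anything internal to the paper. Your route is essentially correct as a sketch, and it is genuinely different from Weyl's. Weyl proves the FFT by reduction to $p\leq n$ variables via polarization (the Capelli identity doing the work) and the SFT by an elementary rank argument in characteristic zero; you instead realize $R^{(p)}$ geometrically as the coordinate ring of the symmetric determinantal variety $X^{(p)}_n$ and identify $I^{(p)}_n$ with the symmetric determinantal ideal via primeness (Kutz, De Concini--Procesi). Your dimension count $np-\binom{n}{2}=\binom{p+1}{2}-\binom{p-n+1}{2}$ is correct, and the limit $p\to\infty$ works because each element of $I_n$ involves finitely many variables and $I_n\cap\mathbb{C}[Q_{a,b}:a,b<p]=I^{(p)}_n$. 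What your approach buys is conceptual clarity and characteristic-free validity (granting standard monomial theory); what Weyl's buys is elementarity and independence from the heavy determinantal-ideal machinery.

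One gloss deserves flagging: ``the fibers over maximal-rank points are single $O(n)$-orbits, so $\psi$ realizes the categorical quotient'' is not automatic. You need an Igusa-type criterion or Zariski's main theorem, both of which require normality of $X^{(p)}_n$, plus orbit separation over lower-rank points (Witt's extension theorem). Since normality of symmetric determinantal varieties follows from the same Kutz/De Concini--Procesi theory you already invoke for the SFT, this is a fillable omission rather than a genuine gap, but as written the FFT step silently depends on the input you deferred to the SFT.
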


The generators $q_{a,b}$ of $R$ correspond to vertex operators \begin{equation} \label{omegagen} \omega_{a,b} = \sum_{i=1}^n :\partial^a \alpha^i \partial^b \alpha^i: \ \in \cH(n)^{O(n)},\ \ \ \ \ \ \ \ 0\leq a\leq b\end{equation} satisfying $\phi_2(\omega_{a,b}) = q_{a,b}$. By Lemma \ref{reconlem}, $\{\omega_{a,b}|~0\leq a \leq b\}$ strongly generates $\cH(n)^{O(n)}$. A more economical choice of strong generating set for $\cH(n)^{O(n)}$ can be found by taking a minimal generating set for $R$ as a $\partial$-ring. The subset $\{q_{0,2m}|\ m\geq 0\}$ is easily seen to have this property, where $\partial q_{a,b} = q_{a+1,b} + q_{a,b+1}$. In fact, $\{q_{a,b}|\ 0\leq a\leq b\}$ and $\{\partial^k q_{0,2m}|\ k,m\geq 0\}$ both form bases for the degree two subspace of $R$. It follows from Lemma \ref{reconlem} that $\{\omega_{0,2m}|\ m\geq 0\}$ strongly generates $\cH(n)^{O(n)}$. As in \cite{LIII}, we use the notation \begin{equation} \label{defofj} j^{2m} = \omega_{0,2m},\ \ \ \ \ \ \ m\geq 0. \end{equation}

The modes $\{j^{2m}(k)| \ m\geq 0,\ k \in \mathbb{Z}\}$ generate a Lie algebra $\gB$ which was first studied by Bloch in \cite{Bl}. It is a central extension of a certain Lie subalgebra of the Lie algebra of differential operators on the circle. This Lie algebra also appeared in \cite{LIII}, although we were not aware of Bloch's work when \cite{LIII} was written. We constructed a $\gB$-module $\cV_n$ which has a vertex algebra structure, and is freely generated by fields $\{J^{2m}|\ m\geq 0\}$ satisfying the same OPE relations as the $j^{2m}$'s. We have a surjective homomorphism $$\pi_n: \cV_n\rightarrow \cH(n)^{O(n)},\ \ \ \ \ \ J^{2m}\mapsto j^{2m},$$ whose kernel $\cI_n$ is the maximal, proper graded $\gB$-submodule of $\cV_n$.

There is an alternative strong generating set $\{\Omega_{a,b}|~0\leq a\leq b\}$ for $\cV_n$ such that $\pi_n(\Omega_{a,b}) = \omega_{a,b}$. Recall the variables $Q_{a,b}$ and $q_{a,b}$ appearing in Theorem \ref{weylfft}. Since $\cV_n$ is freely generated by $\{J^{2m}|~m\geq 0\}$, and the sets $\{\Omega_{a,b}|~0\leq a\leq b\}$ and $\{\partial^k J^{2m}|~k,m\geq 0\}$ form bases for the same vector space, we may identify $\text{gr}(\cV_n)$ with  $\mathbb{C}[Q_{a,b}]$, and we identify $\text{gr}(\cH(n)^{O(n)})$ with $\mathbb{C}[q_{a,b}]/I_n$. Under this identification, $\text{gr}(\pi_{n}): \text{gr}(\cV_n) \ra \text{gr}(\cH(n)^{O(n)})$ is just the quotient map sending $Q_{a,b}\mapsto q_{a,b}$. There is a good increasing filtration on $\cV_{n}$ such that $(\cV_n)_{(2k)}$ is spanned by iterated Wick products of the generators $\Omega_{a,b}$, of length at most $k$, and $(\cV_n)_{(2k+1)} = (\cV_n)_{(2k)}$. Equipped with this filtration, $\cV_n$ lies in the category $\cR$, and $\pi_n$ is a morphism in $\cR$. 

\begin{lemma} \label{ddef} For each $I,J$ satisfying \eqref{ijineq}, there exists a unique element $D_{I,J}\in (\cV_{n})_{(2n+2)}$ of weight $|I| + |J| +2n+2$, where $|I| =\sum_{a=0}^{n} i_a$ and $|J| =\sum_{a=0}^{n} j_a$, satisfying 
\begin{equation} \label{uniquedij}\phi_{2n+2}(D_{I,J}) = d_{I,J},\ \ \ \ \ \ \pi_{n}(D_{I,J}) = 0.\end{equation}
These elements generate $\cI_{n}$ as a vertex algebra ideal. \end{lemma} 

\begin{proof} Clearly $\pi_{n}$ maps each filtered piece $(\cV_n)_{(k)}$ onto $(\cH(n)^{O(n)})_{(k)}$, so the hypotheses of Lemma \ref{idealrecon} are satisfied. Since $I_{n} = \text{Ker} (\text{gr}(\pi_{n}))$ is generated by the determinants $d_{I,J}$, we can apply Lemma \ref{idealrecon} to find $D_{I,J}\in (\cV_{n})_{(2n+2)} \cap \cI_{n}$ satisfying $\phi_{2n+2}(D_{I,J}) = d_{I,J}$, such that $\{D_{I,J}\}$ generates $\cI_{n}$ as a vertex algebra ideal. If $D'_{I,J}$ also satisfies \eqref{uniquedij}, we would have $D_{I,J} - D'_{I,J}\in (\cV_{n})_{(2n)} \cap \cI_{n}$. Since there are no relations in $\cH(n)^{O(n)}$ of degree less than $2n+2$, we have $D_{I,J} = D'_{I,J}$. \end{proof}

Let $D_{I,J}^{2n+2}\in (\cV_{n})_{(2n+2)}$ be some normal ordering of $d_{I,J}$, that is, a normally ordered polynomial obtained by replacing each $Q_{a,b}$ with $\Omega_{a,b}$, and replacing ordinary products with iterated Wick products. Then $\pi_{n}(D_{I,J}^{2n+2}) \in (\cH(n)^{O(n)})_{(2n)}$. Using the procedure outlined in Section \ref{secivt}, we can find a sequence of quantum corrections $D^{2k}_{I,J}$ for $k=1,\dots,n$, which are homogeneous, normally ordered polynomials of degree $k$ in the variables $\Omega_{a,b}$, such that $\sum_{k=1}^{n+1} D^{2k}_{I,J}$ lies in the kernel of $\pi_n$. We have 
\begin{equation}\label{decompofd} D_{I,J} = \sum_{k=1}^{n+1}D^{2k}_{I,J},\end{equation} since $D_{I,J}$ is uniquely characterized by (\ref{uniquedij}). 

In this decomposition, the term $D^2_{I,J}$ lies in the space $A_l$ spanned by $\{\Omega_{a,b}|~a+b=l\}$, for $l = |I| +|J|+2n$. From now on, we will restrict ourselves to the case where $l$ is an {\it even} integer $2m$. In this case, $A_{2m}  = \partial^2 (A_{2m-2}) \oplus \bra J^{2m}\ket$, where $\bra J^{2m}\ket$ denotes the linear span of $J^{2m}$, and we define $\text{pr}_{2m}: A_{2m}\ra \bra J^{2m}\ket$ to be the projection onto the second term. In \cite{LIII} we defined the {\it remainder} $R_{I,J}$ to be $\text{pr}_{2m}(D^2_{I,J})$. We write \begin{equation} \label{remcoeff} R_{I,J} = R_n(I,J) J^{2m},\end{equation} so that $R_n(I,J)$ denotes the coefficient of $J^{2m}$ in $\text{pr}_{2m}(D^2_{I,J})$. By Lemma 4.4 of \cite{LIII}, $R_n(I,J)$ is independent of the choice of decomposition (\ref{decompofd}). 

Suppose that $R_n(I,J) \neq 0$ for some $I,J$. Since $\pi_n(D_{I,J}) = 0$, we obtain a decoupling relation $$j^{2m} = P(j^0, j^2 \dots, j^{2m-2}),\ \ \ \ \ \ \ m = \frac{1}{2}(|I| + |J| + 2n),$$ where $P$ is a normally ordered polynomial in $j^0, j^2,\dots, j^{2m-2}$ and their derivatives. By applying the operator $j^2\circ_1$ repeatedly, we can construct higher decoupling relations $$j^{2r} = Q_{2r}(j^0,j^2,\dots,j^{2m-2})$$ for all $r > m$. The argument is the same as the proof of Theorem 4.7 of \cite{LIII}, and this implies that $\cH(n)^{O(n)}$ is strongly generated by $\{j^0, j^2, \dots, j^{2m-2}\}$. If $m$ is the {\it minimal} integer such that $R_n(I,J) \neq 0$ for some $I,J$ with $m = \frac{1}{2}(|I|+|J| + 2n)$, the strong generating set $\{j^0,j^2,\dots,j^{2m-2}\}$ for $\cH(n)^{O(n)}$ is minimal.

For $I = (0,1,\dots,n) = J$, $D_{I,J}$ is the unique element of the ideal $\cI_n$ (up to scalar multiples) of minimal weight $n^2+3n+2$. Define $R_n=R_n(I,J)$ in this case. In \cite{LIII}, we conjectured that $R_n \neq 0$, and we proved this for $n\leq 3$. This conjecture implies that $\{j^0, j^2, \dots, j^{n^2+3n-2}\}$ is a minimal strong generating set for $\cH(n)^{O(n)}$, and in particular $\cH(n)^{O(n)}$ is a $\cW$-algebra of type $\cW(2,4,\dots, n^2+3n)$.

\section{A recursive formula for $R_n(I,J)$}
In this section we will find a recursive formula for $R_n(I,J)$ in terms of $R_{n-1}(K,L)$ for lists $K,L$ of length $n$. Using this formula, we will show that $R_n \neq 0$ for $n\leq 6$. Although we do not prove that $R_n\neq 0$ in general, we will show that for each $n$, there exists {\it some} $I,J$ such that $R_n(I,J) \neq 0$. As above, this shows that $\cH(n)^{O(n)}$ is strongly finitely generated.

We assume throughout this section that $|I| + |J|+2n$ is even. Since $\text{pr}_{2m}(\Omega_{a,b}) =(-1)^{a} J^{2m}$ for $a+b = 2m$, it is easy to check that for $n=1$, $I = (i_0, i_1)$ and $J = (j_0, j_1)$, 
\begin{equation} \label{recurone} R_1(I,J) = \frac{(-1)^{i_0 + j_0}}{2 + i_0 + i_1}  - \frac{(-1)^{i_0 + j_1}}{2 + i_0 + i_1} + \frac{(-1)^{
 i_0 + i_1}}{2 + i_0 + j_0} - \frac{(-1)^{j_0 + j_1}}{2 + i_1 + j_0} \end{equation} $$  - \frac{ (-1)^{
 i_0 + i_1}}{2 + i_0 + j_1} + \frac{(-1)^{j_0 + j_1}}{2 + i_1 + j_1} - \frac{(-1)^{
 i_1 + j_0}}{2 + j_0 + j_1} + \frac{(-1)^{i_1 + j_1}}{2 + j_0 + j_1}.$$
 
Define a grading $\cH(n) = \bigoplus_{r\geq 0} \cH(n)^{(r)}$, where $\cH(n)^{(r)}$ is spanned by all normally ordered monomials $$:\partial^{k^1_1} \alpha^1 \cdots \partial^{k^1_{r_1}} \alpha^1\partial^{k^2_1} \alpha^2 \cdots \partial^{k^2_{r_2}} \alpha^2 \cdots \partial^{k^n_1} \alpha^n \cdots \partial^{k^n_{r_n}} \alpha^n:.$$ In this notation, $(k^i_1,\dots, k^i_{r_i})$ are sequences of integers satisfying $0\leq k^i_1 \leq k^i_2 \leq \cdots \leq k^i_{r_i}$ for all $i=1,\dots, n$, and $$\sum_{i=1}^n  k^i_1 + \cdots +k^i_{r_i} = r.$$ This grading is compatible with the good increasing filtration on $\cH(n)$ in the sense that $\cH(n)_{(r)} = \bigoplus_{j=0}^r \cH(n)^{(j)}$ and $\cH(n)^{(r)} \cong \cH(n)_{(r)} / \cH(n)_{(r-1)}$. Moreover, the action of $O(n)$ preserves the grading, so we obtain a grading $\cH(n)^{O(n)} = \bigoplus_{r\geq 0} (\cH(n)^{O(n)})^{(r)}$ and an isomorphism of graded vector spaces $$i_n: \cH(n)^{O(n)} \ra (\text{Sym} \bigoplus_{j\geq 0} V_j)^{O(n)},\ \ \ \ \ \ V_j \cong \mathbb{C}^n.$$ Let $p\in (\text{Sym}\bigoplus_{j\geq 0} V_j )^{O(n)}$ be a homogeneous polynomial of degree $2d$, and let $f = (i_{n})^{-1}(p)\in (\cH(n)^{O(n)})^{(2d)}$ be the corresponding homogeneous vertex operator. Let $F\in (\cV_{n})_{(2d)}$ be a vertex operator satisfying $\pi_{n}(F) = f$, where $\pi_{n}: \cV_{n}\rightarrow \cH(n)^{O(n)}$ is the projection. We can write $F = \sum_{k=1}^{d} F^{2k}$, where $F^{2k}$ is a normally ordered polynomial of degree $k$ in the vertex operators $\Omega_{a,b}$.

Let $\tilde{V}$ be the vector space $\mathbb{C}^{n+1}$, and let $$\tilde{q}_{a,b}\in (\text{Sym} \bigoplus_{j\geq 0} \tilde{V}_j)^{O(n+1)}$$ be the generator given by (\ref{weylgenerators}). Here $\tilde{V}_j$ is isomorphic to $\tilde{V}$ for $j\geq 0$. Let $\tilde{p}$ be the polynomial of degree $2d$ obtained from $p$ by replacing each $q_{a,b}$ with $\tilde{q}_{a,b}$, and let $\tilde{f} = (i_{n+1})^{-1} (\tilde{p}) \in (\cH(n+1)^{O(n+1)})^{(2d)}$ be the corresponding homogeneous vertex operator. Finally, let $\tilde{F}^{2k}\in \cV_{n+1}$ be the vertex operator obtained from $F^{2k}$ by replacing each $\Omega_{a,b}$ with the corresponding vertex operator $\tilde{\Omega}_{a,b}\in \cV_{n+1}$, and let $\tilde{F} = \sum_{k=1}^d \tilde{F}^{2k}$. 

\begin{lemma}  \label{corhomo} Fix $n\geq 1$, and let $D_{I,J}$ be an element of $\cV_n$ given by Lemma \ref{ddef}. There exists a decomposition $D_{I,J} = \sum_{k=1}^{n+1} D^{2k}_{I,J}$ of the form \eqref{decompofd} such that the corresponding vertex operator $$\tilde{D}_{I,J} = \sum_{k=1}^{n+1} \tilde{D}^{2k}_{I,J} \in \cV_{n+1}$$ has the property that $\pi_{n+1}(\tilde{D}_{I,J})$ lies in the homogeneous subspace $(\cH(n+1)^{O(n+1)})^{(2n+2)}$ of degree $2n+2$.
\end{lemma}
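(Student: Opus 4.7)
The strategy is to construct the decomposition explicitly and verify the $(r)$-degree property by a direct Wick-theorem computation that exploits the determinantal structure of $d_{I,J}$. I would begin by fixing
\[
D^{2n+2}_{I,J} = \sum_{\sigma \in S_{n+1}} \mathrm{sgn}(\sigma)\, :\Omega_{i_0, j_{\sigma(0)}} \cdots \Omega_{i_n, j_{\sigma(n)}}:
\]
as an alternating-sum lift of the determinant $d_{I,J}$ with a fixed nested order of Wick products. The lower quantum corrections $D^{2k}_{I,J}$ for $k = n, n-1, \dots, 1$ would then be constructed inductively via the procedure of Section~\ref{secivt}, at each step choosing a normal ordering whose top piece $\phi_{2k}(D^{2k}_{I,J})$ is itself an alternating sum of smaller-size minor determinants of the $Q_{a,b}$ arising as the Laplace expansion of the residual contractions from the previous step.

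Next I would compute $\pi_{n+1}(\tilde D_{I,J}) = \sum_k \pi_{n+1}(\tilde D^{2k}_{I,J})$ and decompose each summand into $(r)$-graded pieces via Wick's theorem in $\cH(n+1)$. The OPE $\tilde\alpha^i(z)\tilde\alpha^j(w) \sim \delta_{ij}(z-w)^{-2}$ produces two kinds of contractions between a pair of $\tilde\omega$'s: a \emph{partial} contraction (one pair of $\tilde\alpha$'s) lowers the $(r)$-degree by $2$ with a coefficient independent of the rank, while a \emph{full} contraction (both pairs) lowers it by $4$ and carries an overall factor of $n+1$ from the trace $\sum_{i=1}^{n+1} \delta_{ii}$. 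Grouping the contributions to a fixed $(r)$-degree $2r < 2n+2$ by the total number $m$ of full contractions gives a polynomial expansion in $n+1$; the $m = 0$ part is the formal image under $q_{a,b} \mapsto \tilde q_{a,b}$ of the corresponding $(r)$-degree $2r$ piece of $\pi_n(D_{I,J})$, which vanishes by the hypothesis $\pi_n(D_{I,J})=0$.

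For the rank-dependent parts with $m \geq 1$, the alternating-sum structure from the first step is the crux. After resumming the permutations $\sigma$, each full contraction of $\tilde\omega_{i_a, j_{\sigma(a)}}$ with $\tilde\omega_{i_b, j_{\sigma(b)}}$ produces a derivative-valued prefactor multiplied by a smaller determinant $\tilde d_{I', J'}$ via Laplace expansion. I would show, by induction on $m$ and on the co-depth $n+1-k$, that the $m$-full-contraction contribution from $\tilde D^{2(k+1)}_{I,J}$ pairs with a matching $(m-1)$-full-contraction contribution from $\tilde D^{2k}_{I,J}$ carrying the opposite sign from the alternation, forcing cancellation in $\cH(n+1)^{O(n+1)}$. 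The main obstacle is making this pairing between contributions from distinct $\tilde D^{2k}_{I,J}$ fully explicit; I expect it to reduce to an iterated combinatorial identity for alternating sums of minor determinants, in the spirit of the analysis of $R_n(I,J)$ carried out in Lemma~4.4 of \cite{LIII}.
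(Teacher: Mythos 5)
Your construction of a particular decomposition (the alternating-sum lift of $d_{I,J}$, followed by the correction procedure of Section \ref{secivt}) is fine for producing \emph{some} decomposition of the form \eqref{decompofd}; the gap lies in the verification that its transport to $\cV_{n+1}$ has homogeneous image, which is where the entire content of the lemma sits. First, a bookkeeping error: grouping by the number $m$ of ``full'' pairwise contractions does not yield a polynomial expansion in the rank, because closed \emph{cycles} of single contractions through three or more quadratics also produce a trace factor: contracting one $\alpha$ of $\tilde\omega_1$ with one of $\tilde\omega_2$, one of $\tilde\omega_2$ with one of $\tilde\omega_3$, and the remaining ones of $\tilde\omega_3$ and $\tilde\omega_1$ gives $\sum_{i,j,k}\delta_{ij}\delta_{jk}\delta_{ki} = n+1$. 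The correct expansion parameter is the number of closed contraction cycles; this is repairable by hierarchical bookkeeping. The next step is not repairable as stated: from $\pi_n(D_{I,J})=0$ you infer that the $m=0$ (rank-independent) part of each graded component of degree $2r<2n+2$ vanishes. What the hypothesis actually gives, for each monomial pattern in the $q_{a,b}$ of degree at most $n$ (these \emph{are} linearly independent at rank $n$, since $I_n$ starts in degree $n+1$), is the single scalar equation $\sum_{m\geq 0} c_m\, n^m = 0$, in which all cycle numbers are entangled; it does not give $c_0 = 0$. For a generic decomposition one has $c_0 = -\sum_{m\geq 1}c_m n^m \neq 0$, and the transported degree-$2r$ component equals $\sum_{m\geq 1}\bigl((n+1)^m - n^m\bigr)c_m$, which is exactly the quantity the lemma asserts can be made to vanish by a \emph{suitable} choice of decomposition. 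So the sub-top cancellations must be arranged cycle-number-by-cycle-number (i.e., identically in the rank, not merely at the one value $n$), and nothing in your construction establishes that your choice achieves this.

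That burden then falls entirely on your claimed pairing between $m$-cycle contributions of $\tilde{D}^{2(k+1)}_{I,J}$ and $(m-1)$-cycle contributions of $\tilde{D}^{2k}_{I,J}$, which you explicitly leave as an ``expected'' iterated combinatorial identity --- in other words, the heart of the lemma is unproven. For comparison: the paper itself gives no argument here, stating only that the proof is the same as that of Corollary 4.14 of \cite{LI} for the $\cW_{1+\infty}$ setting, so there is no step-by-step proof to match yours against; but as a standalone attempt, yours fails at the two points named above. If you want a self-contained argument, the workable route is suggested by the paper's own derivation of \eqref{recursion}: induct on $n$ via the Laplace expansion $d_{I,J} = \sum_{r}(-1)^r Q_{i_r,j_0}\, d_{I_r,J'}$, apply the inductive hypothesis to each minor $D_{I_r,J'}$, and observe that the single- and double-contraction corrections $S_r$ appearing there have coefficients (such as $(i_k+i_r+2)^{-1}$) that are independent of the rank, so that the sub-top components cancel identically in the rank rather than only at $n$.
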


\begin{proof} The argument is the same as the proof of Corollary 4.14 of \cite{LI} and is omitted.
\end{proof}

Now we are ready to express $R_n(I,J)$ for all $I=(i_0,\dots, i_n)$, $J= (j_0,\dots, j_n)$ in terms of $R_{n-1}(K,L)$ for lists $K,L$ of length $n$. Let $d_{I,J}$ and $D_{I,J}$ be the corresponding elements of $\mathbb{C}[Q_{j,k}]$ and $\cV_{n}$, respectively. By expanding the determinant $d_{I,J}$ along its first column, we have $$d_{I,J} = \sum_{r=0}^n (-1)^r Q_{i_r,j_0} d_{I_r, J'},$$ where $I_r = (i_0,\dots, \widehat{i_r},\dots, i_n)$ is obtained from $I$ by omitting $i_r$, and $J' = (j_1,\dots,j_n)$ is obtained from $J$ by omitting $j_0$. Let $D_{I_r, J'} \in \cV_{n-1}$ be the vertex operator corresponding to the $n\times n$ determinant $d_{I_r,J'}$. By Lemma \ref{corhomo}, there exists a decomposition $$D_{I_r, J'} = \sum_{i=1}^n D^{2i}_{I_r, J'}$$ such that the corresponding element $\tilde{D}_{I_r, J'} = \sum_{i=1}^n \tilde{D}^{2i}_{I_r, J'} \in \cV_n$ has the property that $\pi_n(\tilde{D}_{I_r, J'})$ lies in the homogeneous subspace $(\cH(n)^{O(n)})^{(2n)}$ of degree $2n$. We have \begin{equation}\label{usefuli} \sum_{r=0}^n (-1)^r :\Omega_{i_r, j_0} \tilde{D}_{I_r,J'}:\  = \sum_{r=0}^n \sum_{i=1}^n (-1)^r :\Omega_{i_r, j_0} \tilde{D}^{2i}_{I_r,J'}:.\end{equation} The right hand side of \eqref{usefuli} consists of normally ordered monomials of degree at least $2$ in the vertex operators $\Omega_{a,b}$, and hence contributes nothing to the remainder $R_{I,J}$. Since $\pi_n(\tilde{D}_{I_r,J'})$ is homogeneous of degree $2n$, $\pi_n(:\Omega_{i_r, j_0} \tilde{D}_{I_r,J'}:)$ consists of a piece of degree $2n+2$ and a piece of degree $2n$ coming from all double contractions of $\Omega_{i_r, j_0}$ with terms in $\tilde{D}_{I_r,J'}$, which lower the degree by two. The component of $$\pi_n\bigg(\sum_{r=0}^n (-1)^r :\Omega_{i_r, j_0} \tilde{D}_{I_r,J'}:\bigg)\in \cH(n)^{O(n)}$$ in degree $2n+2$ must vanish since this sum corresponds to the classical determinant $d_{I,J}$. The component of $:\Omega_{i_r,j_0}\tilde{D}_{I_r,J'}:$ in degree $2n$ is $$S_r = (-1)^{i_r} \bigg(\sum_{k} \frac{\tilde{D}_{I_{r,k},J'}}{i_k +i_r+ 2}  + \sum_{l}  \frac{\tilde{D}_{I_r,J'_l}}{j_l + i_r+ 2} \bigg) + (-1)^{j_0} \bigg(\sum_{k} \frac{\tilde{D}_{I_{r,k},J'}}{i_k +j_0+ 2}  +\sum_{l}  \frac{\tilde{D}_{I_r,J'_l}}{j_l + j_0+ 2}  \bigg).$$ In this notation, for $k=0,\dots,n$ and $k\neq r$, $I_{r,k}$ is obtained from $I_r = (i_0,\dots, \widehat{i_r},\dots, i_n)$ by replacing the entry $i_k$ with $i_k+ i_r+j_0+2$. Similarly, for $l=1,\dots, n$, $J'_l$ is obtained from $J' = (j_1,\dots, j_n)$ by replacing $j_l$ with $j_l + i_r+j_0 + 2$. It follows that \begin{equation} \label{usefulii} \pi_n\bigg(\sum_{r=0}^n (-1)^r :\Omega_{i_r, j_0} \tilde{D}_{I_r,J'}:\bigg)= \pi_n\bigg(\sum_{r=0}^n (-1)^r S_r\bigg).\end{equation}

Combining \eqref{usefuli} and \eqref{usefulii}, we can regard $$\sum_{r=0}^n \sum_{i=1}^n (-1)^r :\Omega_{i_r, j_0} \tilde{D}^{2i}_{I_r,J'}: - \sum_{i=0}^n (-1)^r S_r$$ as a decomposition of $D_{I,J}$ of the form $D_{I,J} = \sum_{k=1}^{n+1} D^{2k}_{I,J}$ where the leading term $D^{2n+2}_{I,J} = \sum_{r=0}^n (-1)^r :\Omega_{i_r, j_0} \tilde{D}^{2n}_{I_r,J'}:$. It follows that $R_n(I,J)$ is the negative of the sum of the terms $R_{n-1}(K,L)$ corresponding to each $\tilde{D}_{K,L}$ appearing in $\sum_{r=0}^n (-1)^r S_r$. We therefore obtain the following recursive formula:
\begin{equation} \label{recursion}R_n(I,J) = -\sum_{r=0}^n (-1)^r(-1)^{i_r} \bigg(\sum_{k} \frac{R_{n-1}(I_{r,k},J')}{i_k +i_r+ 2}  + \sum_{l}  \frac{R_{n-1}(I_r,J'_l)}{j_l + i_r+ 2} \bigg) \end{equation} $$ -\sum_{r=0}^n (-1)^r (-1)^{j_0} \bigg(\sum_{k} \frac{R_{n-1}(I_{r,k},J')}{i_k +j_0+ 2}  +\sum_{l}  \frac{R_{n-1}(I_r,J'_l)}{j_l + j_0+ 2}  \bigg).$$

Recall that for $I = (0,1,\dots,n) = J$, $D_{I,J}$ is the unique element of $\cI_n$ (up to scalar multiples) of minimal weight $n^2+3n+2$, and $R_n=R_n(I,J)$ in this case. Using our recursive formula we can calculate the first few values of $R_n$:

\begin{table}[ht] 
\caption{Some values of $R_n$} 
\centering
\begin{tabular}{c c c c c c c} 
\hline\hline \\
n & 1 & 2 & 3 & 4 & 5 & 6 \\ [0.5ex]
\hline \\ $R_n$ & $\frac{5}{4} $ & $\frac{149}{600}$ & $-\frac{2419}{705600}$ & $-\frac{67619}{18670176000}$ & $\frac{1391081}{4879637199360000}$ &  $ \frac{40984649}{25145492674607585280000} $ \\ [1ex] \hline \end{tabular} \label{table:nonlin} \end{table}

In \cite{LIII}, the values $R_1$, $R_2$, and $R_3$ also appear, but $R_1$ was stated (erroneously) to be $-\frac{5}{4}$. Since $R_n\neq 0$ for $n\leq 6$, these calculations imply the following.
 
\begin{thm} For $n\leq 6$, $\cH(n)^{O(n)}$ has a minimal strong generating set $\{j^0,j^2,\dots,j^{n^2+3n-2}\}$, and in particular is a $\cW$-algebra of type $\cW(2,4,\dots, n^2+3n)$.
\end{thm}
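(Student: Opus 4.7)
The proof will be a direct application of the machinery already assembled. The plan is to verify the nonvanishing condition $R_n \neq 0$ for each $n=1,2,3,4,5,6$, deduce the minimal decoupling relation, invoke the prior result of \cite{LIII} to propagate it to all higher weights, and finally check minimality.

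First, I would compute $R_n$ for the distinguished multi-indices $I=J=(0,1,\dots,n)$ using the recursive formula \eqref{recursion} with base case \eqref{recurone}. The weight of the associated $D_{I,J}$ is $n^2+3n+2$ and the piece $D^2_{I,J}$ lies in the space $A_{n^2+3n}$. Since $n^2+3n$ is even, $A_{n^2+3n} = \partial^2(A_{n^2+3n-2}) \oplus \langle J^{n^2+3n}\rangle$, and the remainder $R_{I,J} = R_n\cdot J^{n^2+3n}$ is precisely the $J^{n^2+3n}$-component. The recursion \eqref{recursion} reduces each evaluation at level $n$ to a rational-linear combination of evaluations at level $n-1$, so in principle the values in Table~\ref{table:nonlin} are determined by a finite (though quickly ballooning) rational arithmetic computation. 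I would carry this out level by level, starting from the explicit closed form \eqref{recurone} for $n=1$, reproducing the entries of the table; the key observation is simply that all six entries are nonzero.

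Second, given $R_n\neq 0$, the relation $\pi_n(D_{I,J})=0$ in $\cH(n)^{O(n)}$ can be solved for $j^{n^2+3n}$. Indeed, each contribution $D^{2k}_{I,J}$ with $k\geq 2$ is a normally ordered polynomial of degree $\geq 2$ in the $\Omega_{a,b}$'s, which under $\pi_n$ becomes a normally ordered polynomial of degree $\geq 2$ in the $\omega_{a,b}$'s, hence in the $j^{2m}$'s with $2m\leq n^2+3n-2$ and their derivatives. The degree-one term $D^2_{I,J}$ lies in $A_{n^2+3n}$ and, using the decomposition $A_{n^2+3n}=\partial^2(A_{n^2+3n-2})\oplus \langle j^{n^2+3n}\rangle$, it splits as $R_n\, j^{n^2+3n}$ plus a total second derivative of an element strongly generated by $\{j^{2m}: 2m\leq n^2+3n-2\}$. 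Dividing by $R_n$ produces the decoupling relation
\[
j^{n^2+3n} = P\bigl(j^0, j^2,\dots, j^{n^2+3n-2}\bigr).
\]

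Third, by Theorem 4.7 of \cite{LIII}, once a decoupling relation for $j^{n^2+3n}$ exists, the operator $j^2\circ_1$ applied repeatedly produces decoupling relations $j^{2r}=Q_{2r}(j^0,j^2,\dots,j^{n^2+3n-2})$ for every $r>\tfrac12(n^2+3n)$. Together with the original strong generating set $\{j^{2m}:m\geq 0\}$ of \cite{LIII}, this shows that $\{j^0, j^2, \dots, j^{n^2+3n-2}\}$ is a strong generating set for $\cH(n)^{O(n)}$ in the range $n\leq 6$.

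Finally, I would argue minimality. The generator $j^{2m}$ has conformal weight $2m+2$, so the candidate set has weights $2,4,\dots,n^2+3n$, giving the type $\cW(2,4,\dots,n^2+3n)$. No generator can be dropped: the minimum-weight element of the ideal $\cI_n=\ker(\pi_n)\subset \cV_n$ is $D_{I,J}$ with $I=J=(0,1,\dots,n)$, of weight $n^2+3n+2$, so every normally ordered polynomial relation among the $j^{2m}$ and their derivatives has weight at least $n^2+3n+2$; since $j^{2m}$ for $2m\leq n^2+3n-2$ has weight strictly less than this, none of them can be written as a normally ordered polynomial in the others and their derivatives, and the generating set is minimal. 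The main obstacle is purely computational: unpacking the recursion \eqref{recursion} for $n=5$ and $n=6$ requires evaluating a large number of lower-level remainders, and care is needed to ensure the arithmetic is correct (the erratum to \cite{LIII} concerning the sign of $R_1$ noted just before the theorem illustrates how easily such computations can go astray).
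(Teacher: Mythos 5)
Your proposal is correct and follows the paper's argument essentially verbatim: verify $R_n\neq 0$ for $n\leq 6$ via the recursion \eqref{recursion} with base case \eqref{recurone}, extract the decoupling relation for $j^{n^2+3n}$ from the decomposition $A_{n^2+3n}=\partial^2(A_{n^2+3n-2})\oplus\langle j^{n^2+3n}\rangle$, and propagate to all $j^{2r}$ with $r>\tfrac12(n^2+3n)$ by Theorem 4.7 of \cite{LIII}. Your explicit minimality argument (no relation in $\cI_n$ has weight below $n^2+3n+2$, so no generator of weight $\leq n^2+3n$ can be decoupled) is the intended one, which the paper leaves implicit.
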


Our next task is to show that for all $n\geq 1$, there exists $I,J$ such that $R_n(I,J) \neq 0$. Fix lists of non-negative integers $I = (i_0,i_1,\dots, i_{n-1})$ and $J = (j_0,j_1,\dots, j_{n-1})$ of length $n$. For each non-negative integer $x$, set $I_x = (i_0,i_1,\dots, i_{n-1},x)$ and $J_x = (j_0,j_1,\dots, j_{n-1},x)$. We may regard $R_n(I_x,J_x)$ as a function of $x$, and if we restrict $x$ to be even, $R_n(I_x,J_x)$ is a {\it rational} function of $x$. 

By applying the recursive formula \eqref{recursion} $k$ times, we can express $R_n(I_x, J_x)$ as a linear combination of terms of the form $R_{m}(A,B)$ where $m=n-k$, $A = (a_0,a_1,\dots, a_m)$, and $B = (b_0,b_1,\dots, b_m)$. Each entry of $A$ and $B$ is a constant plus a linear combination of entries from $I_x$ and $J_x$. Moreover, at most one entry of $A$ depends on $x$, and at most one entry of $B$ depends on $x$. Let $V_m$ denote the vector space spanned by elements $R_m(A,B)$ with these properties. Using \eqref{recursion}, we can express $R_m(A,B)$ as a linear combination of elements in $V_{m-1}$. A term $R_{m-1}(A_{r,k}, B')$ in this decomposition will be called {\it $x$-active} if either $a_r$, $a_k$, or $b_0$ depends on $x$. Similarly, $R_{m-1}(A_r, B'_l)$ will be called $x$-active if $a_r$, $b_0$, or $b_l$ depends on $x$. Define linear maps \begin{equation} f_m: V_m \rightarrow V_{m-1},\ \ \ \ \ \ \ g_m: V_m \rightarrow V_{m-1}\end{equation} as follows: $f_m(R_{m}(A,B))$ is the sum of the terms which are not $x$-active, and $g_m(R_m(A,B))$ is the sum of the terms which are $x$-active. Finally, define the {\it constant term map} \begin{equation} \label{conmap} c_m: V_m \rightarrow \mathbb{Q},\ \ \ \ \ \ \ c_m (R_m(A,B)) = \lim_{x\rightarrow \infty} R_m(A,B). \end{equation}

\begin{lemma} \label{indy} For all $n\geq 2$, $I = (i_0, i_1,\dots, i_{n-1})$ and $J = (j_0,j_1,\dots, j_{n-1})$ with $|I| + |J|$ even, we have \begin{equation} \label{constantid}c_n(R_n(I_x,J_x))= \frac{n}{|I|+|J|+2n} R_{n-1}(I,J).\end{equation} 
\end{lemma}

\begin{proof} For $n=2$ this is an easy calculation, so we may proceed by induction on $n$. In fact, it will be convenient to prove the following auxiliary formula at the same time:
\begin{equation} \label{constantidii} c_{n-1}(g_n(R_{n}(I_x, J_x))) = \frac{1}{|I|+|J|+2n} R_{n-1}(I,J).\end{equation} For $n=2$ this can be checked by direct calculation, so we assume both \eqref{constantid} and \eqref{constantidii} for $n-1$. Each term appearing in $f_n(R_{n}(I_x,J_x))$ is of the form $R_{n-1}(K,L)$ with $K = (k_0,k_1,\dots, k_{n-1},x)$ and $L = (l_0,l_1,\dots, l_{n-1},x)$, and $|K| + |L| + 2(n-1) = |I| + |J| + 2n$. By our inductive hypothesis that \eqref{constantid} holds for $n-1$, we have \begin{equation} \label{indyi}c_{n-1}(f_n(R_n(I_x, J_x))) = \frac{n-1}{|I|+|J|+2n} R_{n-1}(I,J).\end{equation}
Next, it is easy to check that $$c_{n-2}(g_{n-1}(f_n(R_{n}(I_x,J_x)))) = c_{n-2}(f_{n-1}(g_n(R_n(I_x,J_x)))).$$
Also, we have $$g_{n-1}(g_n(R_n(I_x, J_x))) = 0,$$ since all terms in this expression cancel pairwise. Therefore $$c_{n-1}(g_n(R_n(I_x, J_x))) = c_{n-2} (f_{n-1} (g_n(R_n(I_x, J_x)))) + c_{n-2} (g_{n-1} (g_n(R_n(I_x, J_x)))) $$ $$ = c_{n-2} (f_{n-1} (g_n(R_n(I_x, J_x)))) = c_{n-2} (g_{n-1} (f_n(R_n(I_x, J_x)))). $$ Moreover, by applying the induction hypothesis that \eqref{constantidii} holds for $n-1$, it follows that \begin{equation} \label{indyii}c_{n-1}(g_n(R_n(I_x, J_x))) = c_{n-2} (g_{n-1} (f_n(R_n(I_x, J_x)))) = \frac{1}{|I|+|J|+2n} R_{n-1}(I, J).\end{equation} Since $c_n(R_n(I_x,J_x)) = c_{n-1}(f_n(R_n(I_x,J_x)))+c_{n-1}(g_n(R_n(I_x,J_x)))$, the claim follows from \eqref{indyi} and \eqref{indyii}. \end{proof}

\begin{cor} For all $n\geq 1$, there exists $I,J$ such that $R_n(I,J) \neq 0$.
\end{cor}

\begin{proof} We have already shown this for $n\leq 6$, so we may assume inductively that we have found $I = (i_0,i_1,\dots , i_{n-1})$ and $J = (j_0, j_1, \dots, j_{n-1})$ such that $R_{n-1}(I,J) \neq 0$. By Lemma \ref{indy}, $c_n(R_n(I_x, J_x)) \neq 0$, so $R_n(I_x, J_x)$ defines a nontrivial rational function of $x$. This function has at most finitely many zeros, so we can find a positive even integer $x$ such that $R_n(I_x, J_x) \neq 0$. \end{proof}

\begin{thm} \label{heisresult} For all $n\geq 1$, $\cH(n)^{O(n)}$ is strongly generated by $\{j^0, j^2, \dots, j^{2m-2}\}$ for some $m$.
\end{thm}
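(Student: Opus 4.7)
The plan is to produce, for each $n\geq 1$, a single decoupling relation of the form $j^{2m}=P(j^{0},j^{2},\dots,j^{2m-2})$ for some sufficiently large $m$, and then invoke Theorem 4.7 of \cite{LIII} to bootstrap this into decoupling relations $j^{2r}=Q_{r}(j^{0},\dots,j^{2m-2})$ for every $r>m$ by iterated application of $j^{2}\circ_{1}$. Once this is achieved, every generator $j^{2r}$ with $r\geq m$ is expressible in terms of the finite set $\{j^{0},j^{2},\dots,j^{2m-2}\}$ and its derivatives, proving strong finite generation.

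To produce the initial decoupling relation, I would use the ideal elements $D_{I,J}\in\cV_{n}$ coming from the Weyl determinantal relations, specialized to the one-parameter family $I=(0,1,\dots,n)$ and $J=(0,1,\dots,n-1,a)$ with $a\geq n$ and $a\equiv n\pmod{2}$. With this choice $|I|+|J|+2n=n^{2}+2n+a$ is even, say $2m$, so the remainder-coefficient formalism from \eqref{remcoeff} applies and the coefficient of $J^{2m}$ in the degree-two part of $D_{I,J}$ is precisely $R_{n}(I,J)$. If $R_{n}(I,J)\neq 0$, then solving $\pi_{n}(D_{I,J})=0$ for $J^{2m}$ yields a decoupling relation $j^{2m}=P(j^{0},\dots,j^{2m-2})$, which is what we need.

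Writing $f(a):=R_{n}(I,J)$, I would then observe that $f(a)$ is a rational function of $a$: the base case \eqref{recurone} exhibits this explicitly for $n=1$, and the recursion \eqref{recursion} (together with the fact that the $R_{n-1}(K,L)$ appearing there are evaluated at multi-indices obtained from $I$ and $J$ by additive shifts involving $a$) propagates rationality inductively, because every denominator entering \eqref{recursion} is an affine function of $a$. A nonzero rational function has only finitely many zeros, so if $f\not\equiv 0$ we may choose the minimal admissible $a_{0}$ with $f(a_{0})\neq 0$, set $m=\tfrac{1}{2}(n^{2}+2n+a_{0})$, and complete the argument via the bootstrap from \cite{LIII} as above.

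The main obstacle is thus ruling out the degenerate possibility that $f\equiv 0$. I would address this by analyzing the asymptotic behavior of $f(a)$ as $a\to\infty$. In the recursion \eqref{recursion}, every summand with $a$ appearing in its denominator decays to $0$, while summands whose denominator is independent of $a$ contribute a definite limit; inductively, this limit factors through the nonvanishing of analogous leading coefficients for $R_{n-1}$, so an induction on $n$ together with the base case $R_{1}\neq 0$ (visible from \eqref{recurone} by direct evaluation) should force $f(a)\neq 0$ for all sufficiently large admissible $a$. This is the delicate step, but once it is in place the existence of $a_{0}$ is automatic and the theorem follows.
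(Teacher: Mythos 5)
Your proposal follows the paper's own proof essentially step for step: the same one-parameter family $I=(0,1,\dots,n)$, $J=(0,1,\dots,n-1,a)$ with $a\equiv n \pmod 2$, the same function $f(a)=R_n(I,J)$ shown rational in $a$ via \eqref{recurone} and \eqref{recursion}, the same choice of minimal $a_0$ with $f(a_0)\neq 0$ and $m=\tfrac{1}{2}(n^2+2n+a_0)$ yielding the decoupling relation $j^{2m}=P(j^0,\dots,j^{2m-2})$, and the same bootstrap to all $r>m$ by iterating $j^2\circ_1$ as in Theorem 4.7 of \cite{LIII}. Your final paragraph on ruling out $f\equiv 0$ via asymptotics in $a$ addresses a point the paper simply treats as immediate from the recursion (a nonzero rational function has finitely many zeros), so it is a sensible added caution rather than a different route.
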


\begin{proof} If $R_n(I,J) \neq 0$, we can take $m = \frac{1}{2}(|I|+|J| +2n)$.
\end{proof}

For an arbitrary reductive group $G$ of automorphisms of $\cH(n)$, the structure of $\cH(n)^G$ can be understood by decomposing $\cH(n)^{G}$ as a module over $\cH(n)^{O(n)}$. By Theorem 5.1 of \cite{LIII}, the Zhu algebra of $\cH(n)^{O(n)}$ is abelian, which implies that all its irreducible, admissible modules are highest-weight modules. Moreover, both $\cH(n)$ and $\cH(n)^G$ decompose as direct sums of irreducible, highest-weight $\cH(n)^{O(n)}$-modules. By Theorem 6.1 of \cite{LIII}, there is a finite set of irreducible $\cH(n)^{O(n)}$-submodules of $\cH(n)^G$ whose direct sum contains an (infinite) strong generating for $\cH(n)^G$. The key ingredient in the proof is Weyl's finiteness theorem (Theorem \ref{weylfinite}). This shows that $\cH(n)^G$ is finitely generated as a vertex algebra. If we assume the conjecture that $R_n\neq 0$, so that $\{j^0, j^2 ,\dots, j^{n^2+3n-2}\}$ strongly generates $\cH(n)^{O(n)}$, Theorem 6.9 of \cite{LIII} shows that $\cH(n)^G$ is strongly finitely generated. In fact, the only step where this conjecture was needed was the proof of Lemma 6.7 of \cite{LIII}, which gives a finiteness property of each irreducible, highest-weight $\cH(n)^{O(n)}$-submodule of $\cH(n)$. The proof of this lemma goes through if we replace $\frac{1}{2}(n^2+3n)$ with the integer $m$ appearing in Theorem \ref{heisresult} above. This proves the following.

\begin{thm} \label{heisgencase} For all $n\geq 1$ and any reductive group $G\subset O(n)$, $\cH(n)^G$ is strongly finitely generated.
\end{thm}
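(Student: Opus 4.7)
The plan is to follow the strategy already sketched in the paragraph preceding the statement, which explains that the only missing ingredient in the general argument of \cite{LIII} was the conjecture $R_n \neq 0$, and that this conjecture enters only through a single lemma whose statement can be weakened using the (now established) Theorem \ref{heisresult}.

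First, I would invoke Theorem \ref{heisresult} to fix a finite strong generating set $\{j^0, j^2, \dots, j^{2m-2}\}$ for $\cH(n)^{O(n)}$, where $m$ is the integer produced in that theorem. Next, I would decompose $\cH(n)^G$ as a module over the subalgebra $\cH(n)^{O(n)}$. Since the Zhu algebra of $\cH(n)^{O(n)}$ is abelian by Theorem 5.1 of \cite{LIII}, every irreducible admissible module is a highest-weight module, and both $\cH(n)$ and $\cH(n)^G$ decompose as direct sums of such modules. Theorem 6.1 of \cite{LIII} then provides a finite set $\cM_1,\dots,\cM_N$ of irreducible $\cH(n)^{O(n)}$-submodules of $\cH(n)^G$ whose direct sum contains an (infinite) strong generating set for $\cH(n)^G$.

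The main obstacle---and the only place where the input from Theorem \ref{heisresult} is actually needed---is the finiteness property for each such module $\cM_i$ encoded in Lemma 6.7 of \cite{LIII}. In the original version, this lemma asserts that for any highest-weight vector $v$ of an irreducible $\cH(n)^{O(n)}$-submodule of $\cH(n)$, the vectors $j^{2r}\circ_{s} v$ for $r > \tfrac{1}{2}(n^2+3n)-1$ (and appropriate $s$) lie in the span of the analogous vectors with smaller $r$, and this is proved using the (conjectural) decoupling relations $j^{2r} = Q_{2r}(j^0, j^2, \dots, j^{n^2+3n-2})$ that would follow from $R_n\neq 0$. I would re-examine the proof of Lemma 6.7 line by line and observe that the only property of the threshold $\tfrac{1}{2}(n^2+3n)$ used is the existence of a complete family of decoupling relations beyond it. By Theorem \ref{heisresult}, the same family of decoupling relations $j^{2r} = Q_{2r}(j^0,j^2,\dots,j^{2m-2})$ is available for all $r \geq m$, so the proof of Lemma 6.7 of \cite{LIII} goes through verbatim once $\tfrac{1}{2}(n^2+3n)$ is replaced by $m$ throughout.

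With this adapted finiteness lemma in hand, the proof of Theorem 6.9 of \cite{LIII} applies without further change: each of the finitely many submodules $\cM_i$ contributes only finitely many generators modulo the action of $\cH(n)^{O(n)}$, and $\cH(n)^{O(n)}$ itself is strongly finitely generated, so $\cH(n)^G$ is strongly finitely generated. I expect no new obstacles beyond the bookkeeping of replacing $\tfrac{1}{2}(n^2+3n)$ by $m$ in the statement and proof of Lemma 6.7 and tracking the corresponding change of threshold through Theorem 6.9; all other parts of the argument in \cite{LIII} are independent of the conjecture on $R_n$.
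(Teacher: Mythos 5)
Your proposal matches the paper's own proof essentially verbatim: both decompose $\cH(n)^G$ into highest-weight $\cH(n)^{O(n)}$-modules via the abelian Zhu algebra (Theorem 5.1 of \cite{LIII}), invoke Theorem 6.1 of \cite{LIII} for the finite set of modules containing strong generators, and then observe that the only use of the conjecture $R_n\neq 0$ was in Lemma 6.7 of \cite{LIII}, whose proof goes through with the threshold $\tfrac{1}{2}(n^2+3n)$ replaced by the integer $m$ from Theorem \ref{heisresult}, after which Theorem 6.9 of \cite{LIII} applies. Your argument is correct and is the same as the paper's.
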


\section{Invariant subalgebras of affine vertex algebras}

Let $\gg$ be a finite-dimensional Lie algebra equipped with a nondegenerate, symmetric, invariant bilinear form $B$, and let $G$ be a reductive group of automorphisms of $V_k(\gg,B)$ preserving the conformal weight grading, for all $k\in \mathbb{C}$. In this section, we use a deformation argument together with Theorem \ref{heisgencase} to prove Theorem \ref{mainthm}, which states that $V_k(\gg,B)^G$ is strongly finitely generated for generic values of $k$. Fix an orthonormal basis $\{\xi_1,\dots, \xi_n\}$ for $\gg$ relative to $B$, so the generators $X^{\xi_i}\in V_k(\gg,B)$ satisfy $$X^{\xi_i}(z) X^{\xi_j}(w) \sim \delta_{i,j} k (z-w)^{-2} + X^{[\xi_i, \xi_j]}(w) (z-w)^{-1}.$$

Let $\kappa$ be a formal variable satisfying $\kappa^2 = k$, and let $F$ denote the $\mathbb{C}$-algebra of rational functions of the form $\frac{p(\kappa)}{\kappa^d}$ with $\text{deg}(p) \leq d$. Let $\cV$ be the vertex algebra with coefficients in $F$ freely generated by 
$\{a^{\xi_i} |\ i=1,\dots, n\}$, which satisfy the OPE relations $$a^{\xi_i}(z) a^{\xi_j}(w) \sim \delta_{i,j} (z-w)^{-2} + \frac{1}{\kappa}a^{[\xi_i, \xi_j]}(w) (z-w)^{-1}.$$ For $k\neq 0$, we have a surjective vertex algebra homomorphism $$\cV \ra V_k(\gg,B),\ \ \ \ \ a^{\xi_i} \mapsto \frac{1}{\sqrt{k}} X^{\xi_i},$$ whose kernel is the ideal $(\kappa - \sqrt{k})$, so $V_k(\gg,B) \cong \cV/ (\kappa - \sqrt{k})$. For each $m\geq 0$, the subspace $\cV[m]$ of weight $m$ is a free $F$-module, and 
$\text{rank}_{F} (\cV[m]) = \text{dim}_{\mathbb{C}} (V_k(\gg,B)[m])$ for all $k\in \mathbb{C}$. 

Since $F$ consists of rational functions of degree at most zero, the limit $$\cV_{\infty} = \lim_{\kappa \ra\infty} \cV$$ is a well-defined vertex algebra over $\mathbb{C}$ with generators $\alpha^{\xi_1},\dots, \alpha^{\xi_n}$ satisfying $$\alpha^{\xi_i}(z) \alpha^{\xi_j}(w) \sim \delta_{i,j} (z-w)^{-2}.$$ Therefore $\cV_{\infty}$ is isomorphic to $\cH(n)$. Define an $F$-linear map $\psi: \cV \ra \cV_{\infty}$ by \begin{equation} \label{clinmap} \psi\bigg(\sum_r c_r(\kappa) m_r(a^{\xi_i})\bigg) = \sum_{r} c_r m_r(\alpha^{\xi_i}),\ \ \ \ \ \ \ \ \ \ c_r = \lim_{\kappa \ra \infty} c_r(\kappa).\end{equation} In this notation, $m_r(a^{\xi_i})$ is a normally ordered monomial in $\partial^j a^{\xi_i}$, and $m_r(\alpha^{\xi_i})$ is obtained from $m_r(a^{\xi_i})$ by replacing each $a^{\xi_i}$ with $\alpha^{\xi_i}$. This map is easily seen to satisfy $\psi(\omega \circ_n \nu) = \psi(\omega) \circ_n \psi(\nu)$ for all $\omega,\nu \in \cV$ and $n\in \mathbb{Z}$.

Note that $\cV$ possesses a good increasing filtration, where $\cV_{(d)}$ is spanned by normally ordered monomials in $\partial^j a^{\xi_i}$ of degree at most $d$. We have an isomorphism of $F$-modules $\cV\cong \text{gr}(\cV)$, and isomorphisms of $\partial$-rings
$$\text{gr}(\cV) \cong F\otimes_{\mathbb{C}} (\text{Sym} \bigoplus_{j\geq 0} V_j) \cong F\otimes_{\mathbb{C}} \text{gr}(\cV_{\infty}).$$ Here $V_j \cong \gg$ for all $j$, and has a basis $\{\xi_{1,j},\dots, \xi_{n,j}\}$ corresponding to $\{\partial^{j} a^{\xi_1},\dots,\partial^j a^{\xi_n}\}\subset \cV$ and $\{\partial^{j} \alpha^{\xi_1},\dots,\partial^j \alpha^{\xi_n}\}\subset \cV_{\infty}$, respectively.

Since the group $G$ acts on $V_k(\gg,B)$ for all $k$, $G$ also acts on $\cV$ and preserves the formal variable $\kappa$. We have 
$$\text{gr}(\cV^G) \cong \text{gr}(\cV)^G \cong F\otimes_{\mathbb{C}} R \cong F\otimes_{\mathbb{C}} \text{gr}(\cV_{\infty})^G \cong F\otimes_{\mathbb{C}} \text{gr}((\cV_{\infty})^G),$$ where $R=(\text{Sym} \bigoplus_{j\geq 0} V_j)^G$. Finally, $\cV^G[m]$ is a free $F$-module and $$\text{rank}_F(\cV^G[m]) =\text{dim}_{\mathbb{C}} ((\cV_{\infty})^G[m]) = \text{dim}_{\mathbb{C}} (V_k(\gg,B)^G[m])$$ for all $m\geq 0$ and $k\in \mathbb{C}$.

The ring $R$ is graded by degree and weight, where $\xi_{1,j},\dots, \xi_{n,j}$ have degree $1$ and weight $j+1$. Choose a generating set $S = \{s_i|\ i\in I\}$ for $R$ as a $\partial$-ring, where $s_i$ is homogeneous of degree $d_i$ and weight $w_i$. We may assume that $S$ contains finitely many generators in each weight. By Lemma \ref{reconlem}, we can find a corresponding strong generating set $T = \{t_i|\ i\in I\}$ for $\cV^G$, where $$t_i\in (\cV^G)_{(d_i)},\ \ \ \ \ \ \ \ \phi_{d_i}(t_i) = 1\otimes s_i \in F\otimes_{\mathbb{C}} R.$$ Here $\phi_{d_i}: (\cV^G)_{(d_i)} \ra (\cV^G)_{(d_i)}/(\cV^G)_{(d_i-1)}\subset \text{gr}(\cV^G)$ is the usual projection. In particular, the leading term of $t_i$ is a sum of normally ordered monomials of degree $d_i$ in the variables $a^{\xi_1},\dots, a^{\xi_n}$ and their derivatives, and the coefficient of each such monomial is independent of $\kappa$. Let $u_i = \psi(t_i)$ where $\psi$ is given by \eqref{clinmap}, and define \begin{equation} (\cV^G)_{\infty} = \bra U\ket \subset (\cV_{\infty})^G, \end{equation} where $\bra U\ket$ is the vertex algebra generated by $\{u_i|\ i\in I\}$. 

Fix $m\geq 0$, and let $\{m_1,\dots, m_r\}$ be a set of normally ordered monomials in $t_i$ and their derivatives, which spans the subspace $\cV^G[m]$ of weight $m$. Then $(\cV^G)_{\infty}[m]$ is spanned by the corresponding monomials $\mu_l = \psi(m_l)$ for $l=1,\dots, r$, which are obtained from $m_l$ by replacing $t_i$ with $u_i$. Given normally ordered polynomials
$$P(u_i) = \sum_{l=1}^r c_l \mu_l \in  (\cV^G)_{\infty}[m],\ \ \ \ \ \ \ \ \tilde{P}(t_i) = \sum_{l=1}^r  c_l(\kappa) m_l \in\cV^G[m],$$ with $c_l \in \mathbb{C}$ and $c_l(\kappa) \in F$, we say that $\tilde{P}(t_i)$ {\it converges termwise} to $P(u_i)$ if $$\lim_{\kappa \ra \infty} c_l(\kappa) = c_l,\ \ \ \ \ \ l=1,\dots, r.$$ In particular, $\tilde{P}(t_i)$ converges termwise to zero if and only if $\lim_{\kappa \ra \infty} c_l(\kappa) = 0$ for $l=1,\dots, r$.

\begin{lemma} \label{keylemma} For each normally ordered polynomial relation $P(u_i)$ in $(\cV^G)_{\infty}$ of weight $m$ and leading degree $d$, there exists a relation $\tilde{P}(t_i) \in \cV^G$ of weight $m$ and leading degree $d$ which converges termwise to $P(u_i)$.
\end{lemma}

\begin{proof} We may write $P(u_i) = \sum_{a=1}^d P^a(u_i)$, where $P^a(u_i)$ is a sum of normally ordered monomials $\mu = :\partial^{j_1} u_{i_1} \cdots \partial^{j_t} u_{i_t}:$ of degree $a = d_{i_1} + \cdots +d_{i_t}$. The leading term $P^d(u_i)$ corresponds to a relation in $R$ among the generators $s_i$ and their derivatives, i.e., $P^d(s_i) = 0$. It follows that $P^d(t_i) \in (\cV^G)_{(d-1)}$. Since $P^a(u_i) \in ((\cV^G)_{\infty})_{(a)}$ for $a=1,\dots, d-1$, we have $P(t_i) \in (\cV^G)_{(d-1)}$. Since $\{t_i|\ i\in I\}$ strongly generates $\cV^G$, we can express $P(t_i)$ as a normally ordered polynomial $P_0(t_i)$ of degree at most $d-1$. Let $Q(t_i) = P(t_i) - P_0(t_i)$, which is therefore a relation in $\cV^G$ with leading term $P^d(t_i)$. 

If $P_0(t_i)$ converges termwise to zero, we can take $\tilde{P}(t_i) = Q(t_i)$ since $P(t_i)$ converges termwise to $P(u_i)$. Otherwise, $P_0(t_i)$ converges termwise to a nontrivial relation $P_1(u_i)$ in $(\cV^G)_{\infty}$ of degree at most $d-1$. By induction on the degree, we can find a relation $\tilde{P}_1(t_i)$ of leading degree at most $d-1$, which converges termwise to $P_1(u_i)$. Finally, $\tilde{P}(t_i) = P(t_i) - P_0(t_i) - \tilde{P}_1(t_i)$ is easily seen to have the desired properties. \end{proof}

\begin{cor} \label{ginfcom} $(\cV^G)_{\infty} = (\cV_{\infty})^G = \cH(n)^G$.
\end{cor}

\begin{proof} Recall that $\text{rank}_{F} (\cV^G[m]) = \text{dim}_{\mathbb{C}}((\cV_{\infty})^G[m])$ for all $m\geq 0$. Since $(\cV^G)_{\infty} \subset (\cV_{\infty})^G$, it suffices to show that  $\text{rank}_{F} (\cV^G[m]) = \text{dim}_{\mathbb{C}}((\cV^G)_{\infty}[m])$ for all $m\geq 0$. Let $\{m_1,\dots,m_r\}$ be a basis for $\cV^G[m]$ as an $F$-module, consisting of normally ordered monomials in $t_i$ and their derivatives. The corresponding elements $\mu_l = \psi(m_l)$ for $l=1,\dots, r$ span $(\cV^G)_{\infty}[m]$, and Lemma \ref{keylemma} implies that they are linearly independent as well. Otherwise, a nontrivial relation among $\mu_1,\dots, \mu_r$ would give rise to a nontrivial relation among $m_1,\dots, m_r$.
\end{proof}

By Theorem \ref{heisgencase} and Corollary \ref{ginfcom}, $(\cV^G)_{\infty}$ is strongly finitely generated. Without loss of generality, we may assume that $U = \{u_i| \ i\in I\}$ contains a finite subset $$U' = \{u_1,\dots, u_s\},$$ which strongly generates $(\cV^G)_{\infty}$. This implies that for each $u\in U \setminus U'$, there exists a decoupling relation $u = P(u_1\dots, u_s)$ where $P$ is a normally ordered polynomial in $u_1,\dots, u_s$ and their derivatives.

For each subset $K \subset \mathbb{C}$ containing $0$, let $F_K$ denote the $\mathbb{C}$-algebra of rational functions in $\kappa$ of the form $\frac{p(\kappa)}{q(\kappa)}$ where $\text{deg}(p)\leq \text{deg}(q)$ and all roots of $q$ lie in $K$. In this notation, $F = F_K$ for $K = \{0\}$. Given an element $\omega \in \cV^G$, we will denote the element $1\otimes \omega \in F_K\otimes_F \cV^G$ by $\omega$, when no confusion can arise.

\begin{thm} There exists a subset $K\subset \mathbb{C}$ containing $0$ which is at most countable, such that $F_K\otimes_F \cV^G$ is strongly generated by the subset $T' = \{t_1,\dots, t_s\}\subset T$ corresponding to $U'$.
\end{thm}

\begin{proof}
Let $d$ be the first weight such that $U$ contains elements which do not lie in $U'$, and let $u_{1,d},\dots, u_{r,d}$ be the set of elements of $U\setminus U'$ of weight $d$. Since $U'$ strongly generates $(\cV^G)_{\infty}$, we have decoupling relations in $(\cV^G)_{\infty}$ of the form $$u_{j,d} = P_j(u_1,\dots, u_s),\ \ \ \ \ j=1,\dots, r.$$ Let $t_{j,d}$ be the corresponding elements of $T$. By Lemma \ref{keylemma}, there exist relations $$t_{j,d} = \tilde{P}_j(t_1,\dots,t_s, t_{1,d},\dots, \widehat{t_{j,d}},\dots, t_{r,d}), \ \ \ \ \ \ \ \ j=1,\dots, r,$$ 
which converge termwise to $P_j(u_1,\dots, u_s)$. Here $\tilde{P}_j$ does not depend on $t_{j,d}$ but may depend on $t_{k,d}$ for $k\neq j$. Since each $t_{k,d}$ has weight $d$ and $\tilde{P}_j$ is homogeneous of weight $d$, $\tilde{P}_j$ depends linearly on $t_{k,d}$. We can therefore rewrite these relations in the form 
$$\sum_{k=1}^r b_{jk} t_{k,d} = Q_j(t_1,\dots, t_s),\ \ \ \ \ \ \ \ b_{jk} \in F,$$ where $b_{jj} = 1$, $\lim_{\kappa\ra \infty} b_{jk} = 0$ for $j\neq k$, and 
$$Q_j(t_1,\dots, t_s) = \tilde{P}_j(t_1,\dots,t_s, t_{1,d},\dots, \widehat{t_{j,d}},\dots, t_{r,d})  + \sum_{k=1}^{j-1} b_{jk} t_{k,d}  +\sum_{k=j+1}^r b_{jk} t_{k,d}.$$
Clearly $\lim_{\kappa \ra \infty} \det[b_{jk}] = 1$, so this matrix is invertible over the field of rational functions in $\kappa$. Let $K_d$ denote the set of distinct roots of the numerator of $\det[b_{jk}]$ regarded as a rational function of $\kappa$. We can solve this linear system over the ring $F_{K_d}$, so in $F_{K_d}\otimes_F \cV^G$ we obtain decoupling relations $$t_{j,d} = \tilde{Q}_j(t_1,\dots, t_s),\ \ \ \ \ \ \ \ j=1,\dots, r.$$ For each weight $d+1,d+2\dots$ we repeat this procedure, obtaining finite sets $$K_d\subset K_{d+1} \subset K_{d+2} \subset \cdots$$ and decoupling relations $$t = P(t_1,\dots, t_s)$$ in $F_{K_{d+i}}\otimes_F \cV^G$, for each $t \in T\setminus T'$ of weight $d+i$. Letting $K  = \bigcup_{i\geq 0} K_{d+i}$, we obtain a complete set of decoupling relations in $F_K \otimes_F \cV^G$ expressing each $t \in T \setminus T'$ as a normally ordered polynomial in $t_1,\dots,t_s$ and their derivatives. \end{proof}

\begin{proof}[Proof of Theorem \ref{mainthm}] 
Recall that for $k\neq 0$ we have $\cV^G / (\kappa - \sqrt{k}) \cong V_k(\gg,B)^G$. For all $k$ such that $\sqrt{k} \notin K$, we have $$\cV^G / (\kappa - \sqrt{k}) \cong (F_K \otimes_F \cV^G)/ (\kappa - \sqrt{k}) \cong (F_K \otimes_F \bra T' \ket )/ (\kappa - \sqrt{k}) \cong V_k(\gg,B)^G,$$ where $\bra T' \ket$ denotes the space of normally ordered polynomials in $t_1,\dots, t_s$ and their derivatives. It follows that when $\sqrt{k}\notin K$, $V_k(\gg,B)^G = \bra T'_k \ket$, where $T'_k$ is the image of $T'$ under the map $\cV^G \ra V_k(\gg,B)^G$. In particular, $V_k(\gg,B)^G$ is strongly generated by $T'_k$. \end{proof}

\begin{proof}[Proof of Corollary \ref{maincor}] Since $F_{K} \otimes_F \bra T' \ket = F_K \otimes_F \cV^G$, we may define the structure constants of the pair $(F_{K} \otimes_F \cV^G,T')$ to be the coefficients of each normally ordered monomial in the generators and their derivatives appearing in $t_i(z) \circ_n t_j(w)$ for $i,j=1,\dots,s$ and $n\geq 0$. Let $D$ be the set of values of $\kappa$ for which all structure constants are defined. Since there are only finitely many structure constants, and each structure constant is a rational function of $\kappa$, we have $D = \mathbb{C} \setminus K'$ for some finite set $K'\subset K$. 


For all $k$ such that $\sqrt{k} \in D$, define $\cW(\gg,B,G)_k$ to be the subalgebra of $V_k(\gg,B)^G$ generated by $T'_k$. Clearly $\cW(\gg,B,G)_k$ is {\it strongly} generated by $T'_k$. When $\sqrt{k} \notin K$ we have $\cW(\gg,B,G)_k = V_k(\gg,B)^G$, although $\cW(\gg,B,G)_k$ may be a proper subalgebra of $V_k(\gg,B)^G$ if $\sqrt{k} \in D \cap K$. Since the structure constants in $\cW(\gg,B,G)_k$ are rational functions of $\sqrt{k}$, it is immediate that $\cW(\gg,B,G)_k$ is a deformable $\cW$-algebra with the desired properties. \end{proof}

\section{A concrete example: $\gg = \gs\gl_2$ and $G = SL(2)$} In this section we consider the $SL(2)$-invariant subalgebra of $V_k(\gs\gl_2)$ in order to give the reader a feeling for the structure of the invariant vertex algebras studied in this paper. The vertex algebra $V_k(\gs\gl_2)^{SL(2)}$ was previously studied in \cite{BFH}, where it was conjectured to be strongly finitely generated. We work in the usual root basis $x,y,h$ satisfying $$[x,y]=h,\ \ \ \ \ \  [h,x]=2x,\ \ \ \ \ \ [h,y]=-2y.$$ The generators $X^x, X^y, X^h$ of $V_k(\gs\gl_2)$ satisfy
$$X^x(z) X^y(w) \sim k (z-w)^{-2} + X^h(w) (z-w)^{-1},\ \ \ \ \ \ \ \ \ \ X^h(z) X^h(w) \sim  2k (z-w)^{-2},$$
$$X^h(z) X^x(w) \sim  2 X^x(w) (z-w)^{-1},\ \ \ \ \ \ \ \ \ \ X^h(z) X^y(w) \sim  -2 X^y (w)(z-w)^{-1}.$$ Recall that $\gs\gl_2 \cong \gs\go_3$ as complex Lie algebras, and the adjoint representation $V$ of $\gs\gl_2$ is isomorphic to the standard representation $W$ of $\gs\go_3$. Hence for $V_j \cong V$ and $W_j\cong W$, we may identify $(\text{Sym} \bigoplus_{j\geq 0} V_j)^{SL(2)}$ and $(\text{Sym} \bigoplus_{j\geq 0} W_j)^{SO(3)}$. The following theorem is due to Weyl; see Theorem 2.9A and Section 2.17 of \cite{W}.

\begin{thm}\label{weyl} For $n\geq 0$, let $V_n$ be a copy of the adjoint representation of $\gs\gl_2$, with basis $\{a^h_n,a^x_n,a^y_n\}$. Then $(\text{Sym} \bigoplus_{n=0}^{\infty}V_n)^{SL(2)}$ is generated by \begin{equation}\label{quadgen} q_{ij}= a^h_i a^h_j + 2a^x_i a^y_j + 2a^x_j a^y_i,\ \ \ \ \ \ \ i,j\geq 0, \end{equation}
\begin{equation}\label{cubgen}c_{klm}= \left| \begin{array}{lll} a^h_k & a^x_k & a^y_k \\ a^h_l & a^x_l & a^y_l \\ a^h_m & a^x_m & a^y_m \end{array}\right|,\ \ \ \ \ \ \ 0\leq k<l<m. \end{equation} The ideal of relations among the variables $q_{ij}$ and $c_{klm}$ is generated by polynomials of the following two types: \begin{equation}\label{firstrel} q_{ij}c_{klm}-q_{kj}c_{ilm}+q_{lj}c_{kim}-q_{mj}c_{kli}, \ \ \ \ \ \ \ \ \ \ c_{ijk}c_{lmn}+\frac{1}{4}\left| \begin{array}{lll} q_{il} & q_{im} & q_{in} \\q_{jl} & q_{jm} & q_{jn} \\ q_{kl} & q_{km} & q_{kn}\end{array}\right| . \end{equation} \end{thm}

We have linear isomorphisms \begin{equation} \label{solin} \cH(3)^{SO(3)}\cong \text{gr}(\cH(3))^{SO(3)} \cong (\text{Sym} \bigoplus_{j\geq 0} V_j)^{SL(2)}\cong \text{gr}(V_k(\gs\gl_2))^{SL(2)} \cong V_k(\gs\gl_2)^{SL(2)}\end{equation} and isomorphisms of $\partial$-rings \begin{equation} \label{sodr} \text{gr}(\cH(3)^{SO(3)})\cong (\text{Sym} \bigoplus_{j\geq 0} V_j)^{SL(2)}\cong \text{gr}(V_k(\gs\gl_2)^{SL(2)}).\end{equation} The generating set $\{q_{ij},c_{klm}\}$ for $(\text{Sym} \bigoplus_{j\geq 0} V_j)^{SL(2)}$ corresponds to strong generating sets $\{Q_{ij}, C_{klm}\}$ for $\cH(3)^{SO(3)}$ and $\{\tilde{Q}_{ij}, \tilde{C}_{klm}\}$ for $V_k(\gs\gl_2)^{SL(2)}$, respectively. In this notation, $Q_{ij}$ corresponds to the element $\omega_{i,j}$ given by (\ref{omegagen}), and $Q_{0,2m}$ corresponds to $j^{2m}$. Both $Q_{ij}$ and $\tilde{Q}_{ij}$ have weight $i+j+2$, and $C_{klm}$ and $\tilde{C}_{klm}$ have weight $k+l+m+3$. In terms of the generators $X^x, X^y, X^h$ for $V_k(\gs\gl_2)$, 
\begin{equation} \label{deftildeq} \tilde{Q}_{ij} = :\partial^i X^h \partial^jX^h: + 2 :\partial^i X^x \partial^j X^y: + 2 :\partial^i X^y \partial^j X^x:,\end{equation}
$$ \tilde{C}_{klm} = : \partial^k X^x \partial^l X^y \partial^m X^h:  - :\partial^k X^x \partial^m X^y \partial^l X^h:  -  :\partial^l X^x \partial^k X^y  \partial^m X^h:  + : \partial^l X^x \partial^m X^y \partial^k X^h: $$ \begin{equation} \label{deftildec} + :\partial^m X^x  \partial^k X^y  \partial^l X^h:  -  :\partial^m X^x \partial^l X^y \partial^k X^h:.\end{equation} 


Next, we find a strong finite generating set for $\cH(3)^{SO(3)}$, using the structure of $\cH(3)^{SO(3)}$ as a module over $\cH(3)^{O(3)}$ and the fact that $\cH(3)^{O(3)}$ is of type $\cW(2,4,\dots, 18)$. By Theorem 6.1 of \cite{LIII}, for any $G \subset O(3)$, there is a finite set of irreducible, highest-weight $\cH(3)^{O(3)}$-submodules of $\cH(3)$ whose direct sum contains a strong generating set for $\cH(3)^G$. In the case $G=SO(3)$, we only need two such modules $\cM_0$ and $\cM$, where $\cM_0 =\cH(3)^{O(3)}$, which has highest-weight vector $1$ and contains all the quadratics $Q_{ij}$, and $\cM$ has highest-weight vector $C_{012}$ and contains all the cubics $C_{klm}$.

The Lie algebra $\cP$ generated by $\{j^{2l}(k)|\ k,l\geq 0\}$ acts by derivations of degree zero on $\text{gr}(\cH(3))^{SO(3)}$. Let $\cM' \subset \cM$ be the $\cP$-submodule of $\cM$ generated by $C_{012}$, which lies in the filtered component $\cM_{(3)}$ since $C_{012}\in\cM_{(3)}$ and $\cP$ preserves the filtration. By Lemma 6.6 of \cite{LIII}, $\cM'$ is spanned by elements of the form \begin{equation}\label{genmprime}j^{2l_1}(k_1)j^{2l_2}(k_2)j^{2l_3}(k_3) C_{012},\ \ \ \ \ \ \ k_i \leq 5,\end{equation} since we have $d = 3$ and $m=2$ in this case. Without loss of generality, we may assume that $l_i \geq l_j$ for $i,j = 1,2,3$, and $k_i \leq k_j$ whenever $l_i = l_j$. Letting $\cH(3)^{O(3)}[k]$ denote the subspace of weight $k$, define the {\it Wick ideal} $\cM_{Wick} \subset \cM$ to be the subspace spanned by $$:a(z) b(z):, \ \ \ a(z) \in \bigoplus_{k>0} \cH(3)^{O(3)}[k],\ \ \ \ b(z)\in \cM.$$ By Lemma 6.7 of \cite{LIII}, $\cM/ \cM_{Wick}$ is finite-dimensional, so there exists a finite set $S\subset \cM$ such that $\cM$ is spanned by the elements $$:\omega_1(z) \cdots \omega_t(z) \alpha(z):,\ \ \ \ \ \omega_j(z) \in \cH(3)^{O(3)},\ \ \ \ \ \alpha(z) \in S.$$ 

The proof of Lemma 6.7 of \cite{LIII} shows that $S$ can be taken to be a subset of $\cM'$. In fact, we claim that $S$ can be taken to be any basis for the subspace of $\cM'$ of weight at most $327$. Since $j^{2l}(k)$ has weight $2l-k+1$ and $C_{012}$ has weight $6$, $(j^{106}(0) )^3 C_{012}$ has weight $327$. It follows that for any element $\omega$ of the form (\ref{genmprime}) of weight greater than $327$, we must have $2l_1 \geq 108$. The minimal weight of $j^{2l_1}(k_1)$ is $104$ and occurs if $2l_1 = 108$ and $k_1 = 5$. Using the decoupling relation $j^{2l_1} = Q_{2l_1}(j^0,\dots, j^{16})$, the operator $j^{2l_1}(k_1)$ can be expressed as a linear combination of operators of the form \begin{equation} \label{rewrite}j^{2r_1}(s_1) \cdots j^{2r_t}(s_t),\ \ \ \ \ 2r_i \leq 16,\ \ \ \ \ s_i \in \mathbb{Z} .\end{equation} Using the formula \begin{equation}\label{vaidiv} (:ab:)\circ_n
c=\sum_{k\geq0}{1\over k!}:(\partial^ka)(b\circ_{n+k}c):
+(-1)^{|a||b|}\sum_{k\geq0}b\circ_{n-k-1}(a\circ_k c),\end{equation} which holds for any $n>0$ and any vertex operators $a,b,c$ in a vertex algebra $\cA$, it is easy to check that $j^{2l_1}(k_1)$ is a linear combination of terms of the form (\ref{rewrite}) with $1\leq t\leq 6$, and $\sum_{i=1}^t  s_i = 6-t$. The maximal weight of such terms involving no creation operators is $102$, since $(j^{16}(0))^6$ has weight $102$. Since $j^{2l_1}(k_1)$ has weight at least $104$, each term of the form (\ref{rewrite}) will involve creation operators. Therefore $\omega$ lies in the Wick ideal. 

Since all terms of the form (\ref{genmprime}) are linear combinations of the $C_{klm}$, we may take $$S = \{C_{klm}| \  \text{wt}(C_{klm}) = k+l+m +3 \leq 327\}.$$ Theorem 6.9 of \cite{LIII} then shows that $S \cup \{Q_{0,2j}| \  0\leq j \leq 8\}$ is a strong finite generating set for $\cH(3)^{SO(3)}$. Finally, we obtain

\begin{thm} The corresponding set $$\{\tilde{C}_{klm}| \ k+l+m \leq 324\} \cup \{\tilde{Q}_{0,2j}|\ 0\leq j \leq 8\} \subset V_k(\gs\gl_2)^{SL(2)},$$ where $\tilde{Q}_{ij}$ and $\tilde{C}_{klm}$ are given by (\ref{deftildeq})-(\ref{deftildec}), is a strong finite generating set for $V_k(\gs\gl_2)^{SL(2)}$ for generic values of $k$. \end{thm}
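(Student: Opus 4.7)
The plan is to derive the theorem as a direct application of the transfer mechanism developed in the proof of Theorem \ref{mainthm}, combined with the concrete strong finite generating set for $\cH(3)^{SO(3)}$ established in the discussion immediately preceding the theorem. First, I would set up the dictionary between $\cH(3)^{SO(3)}$ and $V_k(\gs\gl_2)^{SL(2)}$. Under the identification $\gs\gl_2\cong \gs\go_3$, the adjoint action of $SL(2)$ corresponds to the defining action of $SO(3)$ on $\mathbb{C}^3$, so the linear isomorphisms (\ref{solin}) and the $\partial$-ring isomorphisms (\ref{sodr}) apply. By Lemma \ref{reconlem}, the Weyl generators $\{q_{ij},c_{klm}\}$ of $R=(Sym \bigoplus_{j\geq 0}V_j)^{\gs\gl_2}$ lift to strong generating sets $T=\{Q_{ij}\}\cup\{C_{klm}\}$ for $\cH(3)^{SO(3)}$ and $U=\{\tilde Q_{ij}\}\cup\{\tilde C_{klm}\}$ for $V_k(\gs\gl_2)^{SL(2)}$ which correspond to one another under (\ref{solin}). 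In particular, the finite subset $T'=\{Q_{0,2j}\mid 0\leq j\leq 8\}\cup\{C_{klm}\mid k+l+m\leq 324\}$ corresponds to $U'=\{\tilde Q_{0,2j}\mid 0\leq j\leq 8\}\cup\{\tilde C_{klm}\mid k+l+m\leq 324\}$.

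Next, I would invoke Theorem \ref{mainthm} directly. Since $T'$ is a strong generating set for $\cH(3)^{SO(3)}$, every element of $T\setminus T'$ admits a decoupling relation over $\bra T'\ket$. The quantum-correction construction in the proof of Theorem \ref{mainthm} then converts each such relation, weight by weight, into a linear system over $\mathbb{C}(k)$ whose coefficient determinant is a nonzero rational function of $k$. Inverting these systems for all $k$ outside an at most countable exceptional set produces decoupling relations expressing each $\tilde q\in U\setminus U'$ as a normally ordered polynomial in the elements of $U'$ and their derivatives. Because $U$ already strongly generates $V_k(\gs\gl_2)^{SL(2)}$, these decoupling relations imply that $U'$ itself strongly generates $V_k(\gs\gl_2)^{SL(2)}$ for generic $k$, which is the content of the theorem.

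The only step requiring verification beyond a direct quotation of Theorem \ref{mainthm} is the dictionary itself: one must confirm that under (\ref{sodr}), the associated graded classes of $\tilde Q_{ij}$ and $\tilde C_{klm}$ really are $q_{ij}$ and $c_{klm}$. This is immediate by inspection of the Wick-product formulas (\ref{omegagen}), (\ref{deftildeq})--(\ref{deftildec}): only the maximal-degree component of each of these vertex operators under the filtration (\ref{filto}) contributes to the associated graded, and this component reproduces (\ref{quadgen})--(\ref{cubgen}) verbatim after identifying $\partial^i X^{\xi}$ with $a^{\xi}_i$ (and similarly on the Heisenberg side). I expect no new obstacle beyond the generic-nonvanishing of the determinants already handled in the proof of Theorem \ref{mainthm}; the main work is bookkeeping to confirm that the specific $T'$ obtained from the Wick-ideal finiteness argument for $\cH(3)^{SO(3)}$ is the correct subset of $T$ to feed into that transfer machinery.
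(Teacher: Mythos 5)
Your proposal is correct and follows essentially the same route as the paper: the paper's entire proof of this theorem is the phrase ``by combining this with our main result,'' i.e., it takes the strong finite generating set $\{Q_{0,2j}\mid 0\leq j\leq 8\}\cup\{C_{klm}\mid k+l+m\leq 324\}$ for $\cH(3)^{SO(3)}$ obtained from the Wick-ideal/weight-$327$ argument and feeds it into the decoupling-relation transfer machinery of Theorem \ref{mainthm}, exactly as you do. Your additional verification that the associated graded classes of $\tilde Q_{ij}$ and $\tilde C_{klm}$ under (\ref{sodr}) recover the Weyl generators (\ref{quadgen})--(\ref{cubgen}) is the same dictionary the paper sets up via (\ref{solin}) and Lemma \ref{reconlem}.
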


Computer calculations indicate that this strong generating set is larger than necessary. We hope to return to the question of finding minimal strong generating sets for these vertex algebras in the future.


\begin{thebibliography}{ABKS}

\bibitem[Bl]{Bl} S. Bloch, \textit{Zeta values and differential operators on the circle}, J. Algebra 182, 476-500 (1996).
\bibitem[Bo]{Bo} R. Borcherds, \textit{Vertex operator algebras, Kac-Moody algebras and the monster}, Proc. Nat. Acad. Sci. USA 83 (1986) 3068-3071.
\bibitem[BFH]{BFH} J. de Boer, L. Feher, and A. Honecker, \textit{A class of $\cW$-algebras with infinitely generated classical limit}, Nucl. Phys. B420 (1994), 409-445.
\bibitem[DSK]{DSK} A. de Sole and V. Kac, \textit{Finite vs affine $\cW$-algebras}, Jpn. J. Math. 1 (2006), no. 1, 137-261. 
\bibitem[DN]{DN} C. Dong and K. Nagatomo, \textit{Classification of irreducible modules for the vertex operator algebra $M(1)^+$}, J. Algebra 216 (1999) no. 1, 384-404.
\bibitem[EFH]{EFH} W. Eholzer, L. Feher, and A. Honecker, \textit{Ghost systems: a vertex algebra point of view}, Nuclear Phys. B 518 (1998), no. 3, 669--688.
\bibitem[FBZ]{FBZ} E. Frenkel and D. Ben-Zvi, \textit{Vertex Algebras and Algebraic Curves}, Math. Surveys and Monographs, Vol. 88, American Math. Soc., 2001. 
\bibitem[FKRW]{FKRW} E. Frenkel, V. Kac, A. Radul, and W. Wang, \textit{$\cW_{1+\infty}$ and $\cW(\gg\gl_N)$ with central charge $N$}, Comm. Math. Phys. 170 (1995), 337-358.
\bibitem[FLM]{FLM} I.B. Frenkel, J. Lepowsky, and A. Meurman, \textit{Vertex
Operator Algebras and the Monster}, Academic Press, New York, 1988.
\bibitem[K]{K} V. Kac, \textit{Vertex Algebras for Beginners}, University Lecture Series, Vol. 10. American Math. Soc., 1998.
\bibitem[KR]{KR} V. Kac and A. Radul, \textit{Representation theory of the vertex algebra $\cW_{1+\infty}$}, Transformation Groups, Vol. 1 (1996) 41-70.
\bibitem[KRW]{KRW} V. Kac, S. Roan, and M. Wakimoto, \textit{Quantum reduction for affine superalgebras}, Comm. Math. Phys. 241 (2003), no. 2-3, 307Ð342.
\bibitem[KWY]{KWY} V. Kac, W. Wang, and C. Yan, \textit{Quasifinite representations of classical Lie subalgebras of $\cW_{1+\infty}$}, Advances in Math. 139  (1998) no. 1, 56-140.
\bibitem[LiI]{LiI} H. Li, \textit{Local systems of vertex operators, vertex superalgebras and modules},
J. Pure Appl. Algebra 109 (1996), no. 2, 143-195.
\bibitem[LiII]{LiII} H. Li, \textit{Vertex algebras and vertex Poisson algebras}, Commun. Contemp. Math. 6 (2004) 61-110.
\bibitem[LL]{LL} B. Lian and A. Linshaw, \textit{Howe pairs in the theory of vertex algebras}, J. Algebra 317, 111-152 (2007).
\bibitem[LI]{LI} A. Linshaw, \textit{Invariant theory and the $\cW_{1+\infty}$ algebra with negative integral central charge}, J. Eur. Math. Soc. 13, no. 6 (2011) 1737-1768.
\bibitem[LII]{LII} A. Linshaw, \textit{A Hilbert theorem for vertex algebras}, Transformation Groups, Vol. 15, No. 2 (2010), 427-448.
\bibitem[LIII]{LIII} A. Linshaw, \textit{Invariant theory and the Heisenberg vertex algebra}, Int. Math. Res. Notices, 17 (2012), 4014-4050.
\bibitem[LZ]{LZ} B. Lian and G. Zuckerman, \textit{Commutative quantum operator
algebras}, J. Pure Appl. Algebra 100 (1995) no. 1-3, 117-139.
\bibitem[P]{P} C. Procesi, \textit{The invariant theory of $n\times n$ matrices}, Advances in Math. 19 (1976) no. 3, 306-381.
\bibitem[W]{W} H. Weyl, \textit{The Classical Groups: Their Invariants and Representations}, Princeton University Press, 1946.


\end{thebibliography}
\end{document}